\newcommand{\NN}{{\mathbb{N}}}
\newcommand{\QQ}{{\mathbb{Q}}}
\newcommand{\bG}{{\mathbf{G}}}
\newcommand{\bZ}{{\mathbf{Z}}}
\newcommand{\cE}{{\mathcal{E}}}
\newcommand{\cG}{{\mathcal{G}}}
\newcommand{\cK}{{\mathcal{K}}}
\newcommand{\fA}{{\mathfrak{A}}}
\newcommand{\fS}{{\mathfrak{S}}}
\newcommand{\sfC}{{\sf{C}}}
\newcommand{\ab}{{\operatorname{ab}}}
\newcommand{\Aut}{\operatorname{Aut}}
\newcommand{\Gal}{\operatorname{Gal}}
\newcommand{\Irr}{\operatorname{Irr}}
\newcommand{\SC}{{\operatorname{sc}}}
\newcommand{\Syl}{\operatorname{Syl}}
\newcommand{\PGL}{\operatorname{PGL}}
\newcommand{\SL}{\operatorname{SL}}
\let\al=\alpha
\let\be=\beta
\let\eps=\epsilon
\let\la=\lambda
\let\vhi=\varphi
\let\si=\sigma
\let\sbs=\subseteq
\let\normal=\unlhd
\def\cent#1#2{{\bf C}_{#1}(#2)}
\def\norm#1#2{{\bf N}_{#1}(#2)}
\def\oh#1#2{{\bf O}_{#1}(#2)}
\theoremstyle{plain}
\newtheorem{thm}{Theorem}[section]
\newtheorem{lem}[thm]{Lemma}
\newtheorem{prop}[thm]{Proposition}
\newtheorem{conj}[thm]{Conjecture}
\newtheorem{cor}[thm]{Corollary}
\newtheorem*{thmA}{Theorem A}
\newtheorem*{conjB}{Conjecture B}
\theoremstyle{definition}
\newtheorem{rem}[thm]{Remark}
\begin{document}

\title[Continuity of $p$-rationality]{The continuity of $p$-rationality of characters\\ and the principal block}

\author{Gunter Malle}
\address[G. Malle]{FB Mathematik, RPTU Kaiserslautern,
  67653 Kaisers\-lautern, Germany.}
\makeatletter
\email{malle@mathematik.uni-kl.de}
\makeatother

\author{J. Miquel Mart\'inez}
\address[J. M. Mart\'inez]{Departament de Matem\`atiques, Universitat
  de Val\`encia, 46100 Burjassot, Val\`encia.}
\makeatletter
\email{josep.m.martinez@uv.es}
\makeatother

\author{Carolina Vallejo}
\address[C. Vallejo]{Dipartamento di Matematica e Informatica `Ulisse Dini',
  50134 Firenze, Italy}
\makeatletter
\email{carolina.vallejorodriguez@unifi.it}
\makeatother

\begin{abstract}
We study rationality properties of irreducible characters of finite groups.
We show that the continuity of $2$-rationality is a phenomenon that can be
detected in the principal $2$-block, thus refining a recent result of
N.\ N.\ Hung. We also propose a conjectural group theoretical criterion for the
continuity gap at level~$1$ for all primes. 
\end{abstract}

\thanks{The first author gratefully acknowledges support by the DFG
 -- Project-ID 286237555 -- TRR 195. The work of the second author is
 funded by the European Union -- Next Generation EU, Missione 4 Componente 1,
 PRIN 2022-2022PSTWLB -- Group Theory and Applications, CUP B53D23009410006 as
 well as the Spanish Ministerio de Ciencia e Innovaci\'on (Grant
 PID2022-137612NB-I00 funded by MCIN/AEI/ 10.13039/501100011033 and “ERDF A way
 of making Europe”). The third author is supported by Rita
 Levi-Montalcini programme (bando 2019) of the Italian Ministero
 dell'Universit\`a e la Ricerca.  She also acknowledges the support of the
 Istituto Nazionale di Alta Matematica (INdAM), as part of the GNSAGA.
 The authors wish to thank Gabriel Navarro, Noelia Rizo and Mandi Schaeffer Fry
 for their comments on an earlier version of this manuscript}

\keywords{$p$-rationality of characters, principal blocks, character
  conductors, character degrees}

\subjclass[2020]{20C15, 20C33}

\dedicatory{Dedicated to Gabriel Navarro on his 60th birthday}

\date{\today}

\maketitle


\section{Introduction}

For $K$ an abelian number field, its \emph{conductor $c(K)$} is
defined as the smallest positive integer $n$ such that $K\sbs\QQ_n$, where
$\QQ_n$ denotes the extension of $\QQ$ generated by a primitive $n$th root of
unity. Given a finite group $G$ and a complex irreducible character
$\chi\in\Irr(G)$, the \emph{conductor of~$\chi$}, denoted $c(\chi)$, is defined
as $c(\QQ(\chi))$, the conductor of the field generated by the values of~$\chi$.
For a prime $p$, the \emph{$p$-rationality level} of $\chi\in\Irr(G)$ is
the non-negative integer $\al$ such that $c(\chi)_p=p^\al$. (Here, we write
$n_p$ for the exact power of $p$ dividing an integer~$n$.) Characters with
specific $p$-rationality level have been extensively studied in the literature.
For example, the $p$-rational characters are the characters with $p$-rationality
level~$0$ and the \emph{almost $p$-rational characters} are the ones with
$p$-rationality level $\leq 1$. Both of these families have been shown to
influence the (local) structure of $G$ (see, for instance
\cite{Isa-Nav-San12, Nav-Riz-Sch-Val21, Hun-Mal-Mar22, Mar-Mar-Sch-Val24}).

In \cite{Hun24}, Hung conjectured that if a finite group $G$ has some
irreducible character~$\chi$ of degree coprime to $p$ with $p$-rationality level
$\al\geq 2$, then $\Irr(G)$ contains characters of degree coprime to $p$ with
$p$-rationality level $\beta$ for any $2\leq\beta\leq \al$. We will
refer to this phenomenon as the \emph{continuity of $p$-rationality}.
In the same paper, he has proven the above conjecture for $p=2$. In the final
section of \cite{Hun24}, Hung suggests the existence of a principal
block version of \cite[Conj.~1.1]{Hun24}:

\begin{conj}[Hung]   \label{conj:Hung}
 Let $G$ be a finite group and $p$ be a prime. If the principal $p$-block
 $B_0(G)$ of $G$ contains an irreducible character of degree coprime to $p$
 with $c(\chi)_p=p^\al$ then it contains irreducible characters of degree
 coprime to $p$ and $c(\psi)_p=p^\beta$ for all $2\leq\beta\leq\al$.
\end{conj}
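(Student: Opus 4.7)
The plan is to establish Conjecture~\ref{conj:Hung} in the case $p=2$ — the setting in which Hung's non-block version is known — by refining his argument while tracking principal block membership at every step. I would first take a minimal counterexample $G$ and apply Brauer's third main theorem to reduce to $\oh{2'}{G}=1$: any nontrivial normal $2'$-subgroup is in the kernel of every character of $B_0(G)$, so the problem passes to the quotient. Further Clifford-theoretic reductions involving the layer $E(G)$ should leave $G$ essentially almost simple, meaning $F^*(G)/Z(F^*(G))$ is simple of even order.

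At this point I would invoke the Classification of Finite Simple Groups. For each $\beta$ with $2\le\beta\le\al$, the task is to exhibit $\psi\in B_0(G)$ of odd degree with $c(\psi)_2=2^\beta$. For groups of Lie type in odd characteristic, the $2'$-degree characters in $B_0$ can be described through Jordan decomposition as those attached to semisimple elements of the dual group whose centralizer contains a Sylow $2$-subgroup; the $2$-part of the conductor of a semisimple character is then governed by the order of the labelling element and by the action of the defining Frobenius. This should allow a uniform construction by varying the $2$-power order of elements in a suitable maximal torus. For alternating groups one uses hook-length combinatorics and the description of $B_0$ via $2$-cores, and sporadic groups are handled by direct inspection of character tables.

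Characters found at the quasi-simple level would then be pulled back to $G$ using Clifford theory, with Brauer's third main theorem ensuring that the lifts remain in $B_0(G)$. The \emph{main obstacle} is precisely this principal block constraint, which is absent from Hung's original argument: the non-block version grants the freedom to use any $2'$-degree character of the prescribed conductor, whereas here many natural candidates (for example, semisimple characters attached to Lusztig series far from the trivial one) sit outside~$B_0$. The heart of the proof is therefore to show that, even restricted to $B_0(G)$, the set of $2$-conductors of odd-degree characters has no gaps between~$2^2$ and~$2^\al$. I expect delicate case distinctions for the groups of Lie type, likely turning on the parity of certain root-system invariants and on the $2$-adic structure of the underlying tori.
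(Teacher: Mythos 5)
Your overall strategy -- invoke CFSG, run a reduction through Clifford theory and Brauer's third main theorem, and then settle groups of Lie type via Jordan decomposition of odd-degree characters in the principal block -- does match the paper's broad outline. But several concrete ingredients are missing, and one step of your reduction would not actually terminate where you claim.

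The paper does \emph{not} reduce Conjecture~\ref{conj:Hung} to the case of almost simple $G$. In Theorem~\ref{thm:reduction} the minimal configuration is a group $G$ with $\oh{p'}{G}=1$, a minimal normal subgroup $N=S_1\times\dots\times S_t$ that is a direct product of $t$ copies of a non-abelian simple group permuted by $G$, and $G/N$ a $p$-group. You cannot push further to $t=1$ in general, because the characters needed may be $G$-conjugates over the different factors. What replaces your ``essentially almost simple'' reduction is the precise Conjecture~\ref{conj:simple}: part (b) demands that $\Irr_{p'}(B_0(S))$ contain \emph{$A$-invariant} characters of every intermediate $p$-rationality level, for $A$ almost simple with $A/S$ a $p$-group. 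Without this invariance you cannot form the $G$-invariant product characters $\theta_\beta=\eta_\beta^{x_1}\times\dots\times\eta_\beta^{x_t}$ and extend them to $G$. Your proposal never isolates this $A$-invariance requirement, and it is exactly what makes the reduction nontrivial. A second ingredient the paper uses that you don't mention is Conjecture~\ref{conj:simple}(a), which (via Navarro--Tiep) pins down $\exp(P/P')$ as the maximal $p$-rationality level and lets them control the Gallagher factor $\rho$ in the decomposition $\chi=\hat\theta\rho$.

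For the simple-group step itself your plan is much too open-ended. The decisive simplification (from \cite[Prop.~3.9]{Nav-Tie19} combined with \cite{Mal19}) is that for $p=2$ one has $\exp(P/P')=2$ for every simple group of Lie type in odd characteristic \emph{except} $E_6(\eps q)_{\SC}$ and $\SL_{2n}(\eps q)$ with $4\mid(q-\eps)$ and $n$ not a power of~$2$; so the ``delicate case distinctions'' you anticipate collapse to just these two families, and the alternating, sporadic and defining-characteristic cases are dispatched in one line because $c(\chi)_p\le p^2$ always holds there. Finally, the principal-block obstacle you correctly identify as the heart of the matter is resolved in the paper by concrete block-theoretic inputs -- Brou\'e--Michel for $\Irr(B_0(G))\subseteq\bigsqcup_t\cE(G,t)$ over $2$-elements $t$, Enguehard's \cite[Thm~B]{En00} (for $E_6$) and \cite[Thm~21.14]{CE04} (for $\SL_{2n}$) to place the semisimple characters $\chi_t$ in $B_0$, connectedness of $\cent{\bG^*}{t}$ to get a unique semisimple character, and \cite[Prop.~7.2]{Tay18} to propagate $A$-stability to the series $\cE(G,t^k)$. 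Your proposal names the obstacle but offers no mechanism to overcome it; those citations are precisely the missing mechanism.
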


In our first main result, we confirm Hung's prediction for the prime $p=2$. 

\begin{thmA}
 Conjecture \ref{conj:Hung} holds for $p=2$.
\end{thmA}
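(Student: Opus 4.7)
The plan is to deduce Theorem~A from Hung's character-table-wide result at $p=2$ in \cite{Hun24} by means of the recently established Galois-equivariant McKay--Navarro correspondence for principal $2$-blocks. Let $P\in\Syl_2(G)$ and $N=\norm{G}{P}$. Combined progress on the McKay--Navarro conjecture at $p=2$ yields a bijection
\[
f\colon \Irr_{2'}(B_0(G)) \longrightarrow \Irr_{2'}(B_0(N))
\]
that commutes with the action of the subgroup $\mathcal{H}\leq\Gal(\QQ_{|G|}/\QQ)$ fixing all roots of unity of odd order. Because the $\mathcal{H}$-stabilizer of a character determines the $2$-part of its field of values, this equivariance forces $c(\chi)_2=c(f(\chi))_2$ for every $\chi\in\Irr_{2'}(B_0(G))$. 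It therefore suffices to prove the conclusion of Conjecture~\ref{conj:Hung} for $N$, which has $P$ as a normal Sylow $2$-subgroup.

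I would next pass to $M:=N/\oh{2'}{N}$ via the standard inflation correspondence $\Irr(B_0(N))\cong \Irr(B_0(M))$, which preserves degrees and character values. In $M$ we have $\oh{2'}{M}=1$ and hence $F^*(M)=P$, while $M/P$ is a $2'$-group. By Clifford theory every odd-degree $\chi\in\Irr(M)$ lies above some linear character $\lambda\in\Irr(P)$, and the $2$-part of $c(\chi)$ is controlled by $o(\lambda)$ together with the action of $M/P$ on $\langle\lambda\rangle$.

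Given $\chi\in\Irr_{2'}(B_0(M))$ with $c(\chi)_2=2^\alpha$, lying above $\lambda\in\Irr(P)$ with $o(\lambda)=2^\alpha$, I would construct for each $2\leq\beta\leq\alpha$ a character $\chi_\beta$ of $2$-rationality level $\beta$ as follows. Let $\lambda_\beta:=\lambda^{2^{\alpha-\beta}}$, a linear character of $P$ of order exactly $2^\beta$. Since $M/P$ is a $2'$-group, $\lambda_\beta$ extends canonically to its stabilizer $T_\beta\leq M$ by the coprime-order character extension theory. A suitable character of $M$ induced from such an extension yields an odd-degree $\chi_\beta\in\Irr(M)$ lying above $\lambda_\beta$ with $c(\chi_\beta)_2=2^\beta$: the lower bound $2^\beta$ is immediate from $\lambda_\beta$, while no larger $2$-power roots of unity can be introduced, as the contributions coming from the $2'$-quotient $M/P$ involve only odd-order values.

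The main obstacle is to certify that each $\chi_\beta$ really lies in $B_0(M)$ rather than in another $2$-block. I would tackle this via the block-theoretic description of the $2$-blocks of $M$: all such blocks have $P$ as defect group, and are parametrized by $M$-orbits of Brauer pairs. The crucial structural point is that $\lambda_\beta\in\langle\lambda\rangle$ and $\chi$ already belongs to $B_0(M)$ by hypothesis, so the Brauer pair attached to $\chi_\beta$ should be $M$-conjugate to the principal one through the cyclic subgroup $\langle\lambda\rangle$. Once both the block-membership and conductor assertions are verified, pulling the $\chi_\beta$ back through $f^{-1}$ produces the desired characters in $B_0(G)$ and completes the proof of Theorem~A.
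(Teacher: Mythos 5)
The proposal breaks down at the very first step, and the gap is precisely the difficulty that the paper is designed to overcome. You invoke a ``recently established Galois-equivariant McKay--Navarro correspondence for principal $2$-blocks,'' that is, a bijection $\Irr_{2'}(B_0(G))\to\Irr_{2'}(B_0(\norm GP))$ commuting with the relevant Galois action. This is exactly the (principal block case of the) Alperin--McKay--Navarro conjecture, Conjecture~\ref{conj:navarro} in the paper, and it is \emph{not} a theorem: the introduction explicitly remarks that the Alperin--McKay--Navarro conjecture is ``one of the few global-local conjectures that remain unreduced at the time of this writing.'' What \emph{is} known for $p=2$ is Malle's proof of the Navarro--Tiep conjecture \cite{Mal19}, which controls only the \emph{maximal} $2$-rationality level over all of $\Irr_{2'}(G)$, not a block-respecting, conductor-preserving bijection down to $\norm GP$. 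So your reduction to the normal-Sylow case is unavailable, and it is exactly for this reason that the paper must instead prove a reduction to simple groups (Theorem~\ref{thm:reduction}) and then verify the resulting simple-group condition (Conjecture~\ref{conj:simple}(b)) for $p=2$ by a case analysis of groups of Lie type in Theorem~\ref{thm:p=2}.

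Two smaller remarks on the rest of the argument, which is essentially Lemma~\ref{lem:ConA-Nav} done with more machinery than needed. First, once you pass to $M=N/\oh{2'}N$ you have $\oh{2'}M=1$ and $P$ normal, so by \cite[Thm~10.20]{Nav98} $M$ has a \emph{unique} $2$-block; the block-membership worry you flag at the end is vacuous, and there is no need for Brauer pairs. Second, you do not need to carefully select a specific extension and induce: any $\chi_\beta\in\Irr(M\mid\la_\beta)$ automatically has odd degree (since $|M:P|$ is odd) and, by Lemma~\ref{lem:Hung 2.4}, satisfies $c(\chi_\beta)_2=c(\la_\beta)_2=2^\beta$ once $\beta\ge2$. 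So that part is fine but overengineered; the real obstruction is the unproven block-wise Galois--McKay input in Step~1.
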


Hung's Conjecture is implied by the celebrated Alperin--McKay--Navarro
conjecture \cite[Conj.~B]{Nav04}. In particular, our Theorem~A provides
further evidence for the validity of the Alperin--McKay--Navarro conjecture (at
least for principal blocks and the prime $p=2$), one of the few global-local
conjectures that remain unreduced at the time of this writing.

\smallskip

Assuming \cite[Conj.~A]{Nav-Tie19}, if $\al\geq 2$ is the largest
$p$-rationality level of characters of degree coprime to $p$ of a finite group
$G$ then it is attained at an element of $\Irr(B_0(G))$. Given that
\cite[Conj.~A]{Nav-Tie19} has been proven for $p=2$ in \cite{Mal19}, Theorem~A
above refines the main result of \cite{Hun24}.  Furthermore, using the fact
that \cite[Conj.~A]{Nav-Tie19} has been reduced to a problem on simple groups,
we are also able to reduce Conjecture~\ref{conj:Hung} to a problem on simple
groups (see Theorem~\ref{thm:reduction}) for all primes~$p$.

Interestingly, the continuity of $p$-rationality stops at level~$2$. This is
because there are groups that have an irreducible character of degree coprime to
$p$ and $p$-rationality level~$2$ but none of $p$-rationality level~$1$. That
is to say, in these cases all the irreducible characters of degree coprime to
$p$ that are almost $p$-rational are in fact $p$-rational. An example was
already given in \cite[Rem.~1.6]{Hun24}. We offer here an explanation of this
gap by conjecturally characterizing the finite groups all of whose almost
$p$-rational characters of degree coprime to $p$ are $p$-rational. 

\begin{conjB}   \label{conj:B}
 Let $G$ be a finite group, $p$ be a prime and $P\in \Syl_p(G)$. Write
 $K=\norm G P/\Phi(P)$ and $Q=P/\Phi(P)$.
 Then the following are equivalent:
 \begin{enumerate}[\rm(i)]
  \item No irreducible character of $G$ of degree coprime to $p$ has
   $p$-rationality level $1$.
  \item For every $1\neq y\in Q$ we have:
  \begin{enumerate}[\rm(a)]
   \item $\norm K{\langle y\rangle}/\cent K{\langle y\rangle}\cong\Aut(\langle y\rangle)$, and
   \item $\norm K {\langle y\rangle}/Q$ acts trivially on the set of conjugacy
    classes of $\cent K{y}/Q$.
  \end{enumerate}
 \end{enumerate}
\end{conjB}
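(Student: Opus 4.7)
My plan is to reduce from $G$ to $N=\norm G P$ using a Galois-equivariant McKay correspondence, then parameterize $\Irr_{p'}(N)$ by Clifford theory and read off conditions (a) and (b) from the criterion for $p$-rationality of level-$\leq 1$ characters. Navarro's refinement of the McKay conjecture predicts that $\Gal(\QQ^\ab/\QQ)$ acts on $\Irr_{p'}(G)$ and on $\Irr_{p'}(N)$ with matching orbits, so condition~(i) transfers between $G$ and $N$; this parallels the authors' own reduction for Hung's conjecture (Theorem~\ref{thm:reduction}). Since $P\normal N$ with $|N/P|$ coprime to $p$, Schur--Zassenhaus gives $N=P\rtimes H$, and every $\chi\in\Irr_{p'}(N)$ has the form $(\hat\lambda\mu)^N$ for a pair (unique up to $N$-conjugacy) consisting of a linear character $\lambda\in\Irr(P/[P,P])$ and $\mu\in\Irr(I_N(\lambda)/P)$, where $\hat\lambda$ is the canonical extension to $I_N(\lambda)$ trivial on a Hall complement of $P$. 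Since $\hat\lambda$ has $p$-power values and $\mu$ takes $p'$-values (as $I_N(\lambda)/P$ is a $p'$-group), one has $c(\chi)_p\leq p$ precisely when $\lambda\in\Irr(Q)$ with $Q=P/\Phi(P)$.

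For such a level-$\leq 1$ character, the Galois automorphism $\sigma_a$ fixing $p'$-roots and acting as $\zeta_p\mapsto\zeta_p^a$ carries $\chi$ to $(\widehat{\lambda^a}\mu)^N$; hence $\chi$ is $p$-rational iff for every $a\in(\mathbb{Z}/p)^\times$ there is $n\in N$ with $\lambda^n=\lambda^a$ and $\mu^n=\mu$. Working in $K=N/\Phi(P)$, the solvability of the first equation for every $a$ is exactly $\norm K{\langle\lambda\rangle}/\cent K{\langle\lambda\rangle}\cong\Aut(\langle\lambda\rangle)$, which via Pontryagin duality between $Q$ and $\Irr(Q)$ becomes condition~(a). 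Granted (a), any $n_a$ with $\lambda^{n_a}=\lambda^a$ determines a well-defined outer automorphism of $I_K(\lambda)/Q$, and by Brauer's permutation lemma the further constraint $\mu^{n_a}=\mu$ for every $\mu\in\Irr(I_K(\lambda)/Q)$ is equivalent to the trivial action of $\norm K{\langle\lambda\rangle}/Q$ on the conjugacy classes of $I_K(\lambda)/Q$, which after the duality translation is condition~(b).

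The main obstacle is Step~1: a Galois-equivariant McKay correspondence is open for odd primes, and its reduction to simple groups in the generality required by Conjecture~B is nontrivial. A natural route is to mimic Theorem~\ref{thm:reduction} and reduce Conjecture~B directly to a statement about (almost) simple groups, to be verified case by case using known data on character fields. A secondary subtlety is the Pontryagin duality between the $\lambda\in\Irr(Q)$ form of the conditions extracted from Clifford theory and the $y\in Q$ form appearing in the statement; while the sets of $K$-lines in $Q$ and in $\Irr(Q)$ match in cardinality under the dual $K/Q$-actions, converting (a) and (b) for all $\lambda$ into (a) and (b) for all $y$ needs a careful argument that also handles the (small) class of higher-order linear characters of $P$ whose $N$-orbits might accidentally satisfy the level-$\leq 1$ criterion.
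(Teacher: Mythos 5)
Your plan matches the paper's treatment almost exactly: the paper does not prove Conjecture~B in general (it is stated as a conjecture), but it establishes it unconditionally when $P\normal G$ (Theorem~\ref{thm:normaldefect}) and then derives the general case from the McKay--Navarro conjecture (Corollary~\ref{cor:Hung-Navarro}), precisely the two steps you describe. Your Step~2 --- the Clifford-theoretic parameterization $\chi = (\hat\lambda\mu)^N$, the criterion that $\chi$ is $p$-rational iff each $\sigma_a$ can be undone by some $n\in N$ conjugating $\lambda$ to $\lambda^a$ and fixing $\mu$, and the translation into conditions~(a) and~(b) via semi-inertia plus Brauer's permutation lemma --- is the content of Theorem~\ref{thm:normaldefect}, and the ``secondary subtlety'' you flag about the Pontryagin duality between $Q$ and $\Irr(Q)$ is handled rigorously in the paper's Lemma~\ref{lem:permutationisomorphism} (using Huppert's coprime-action permutation isomorphism \cite[Thm~18.10]{Hup98}).

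One point where your justification is incomplete: the claim that $c(\chi)_p\le p$ \emph{precisely} when $\lambda\in\Irr(Q)$. The direction you argue ($\lambda\in\Irr(Q)\Rightarrow c(\chi)_p\le p$) follows from the value considerations you give, but the converse is the one needed to ensure no ``accidental'' level-$\le 1$ characters arise from $\lambda$ of order $\ge p^2$, and it does not follow from inspecting the factors $\hat\lambda$ and $\mu$ alone, since induction can shrink the field of values. The paper's Lemma~\ref{lem:frattini} fills this in with an orbit argument: if $\chi$ is $\sigma_1$-fixed, then the $p$-group $\langle\sigma_1\rangle$ permutes the $G$-conjugates of $\lambda$, a set of $p'$-size, so $\sigma_1$ fixes one of them and hence all of them, forcing $\lambda^{p+1}=\lambda$ and thus $\Phi(P)\subseteq\ker\lambda$. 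Finally, note the paper does not attempt a reduction of Conjecture~B to simple groups analogous to Theorem~\ref{thm:reduction}; it is content to observe (Corollary~\ref{cor:Hung-Navarro}) that McKay--Navarro plus Theorem~\ref{thm:normaldefect} suffice, and then cites the known cases of McKay--Navarro to verify Conjecture~B for $p$-solvable, sporadic, symmetric, alternating, and defining-characteristic Lie type groups.
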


Notice that for $p=2$ all three conditions above are trivially satisfied as,
by definition, the conductor of a character cannot have $2$-part equal to $2$.
In Theorem \ref{thm:normaldefect}, we prove that Conjecture~B holds for groups
with a normal Sylow $p$-subgroup. As we will explain in Sections~\ref{sec:2}
and~\ref{sec:normaldefect}, Theorem~\ref{thm:normaldefect} implies that
Conjecture~B would be a consequence of the McKay--Navarro conjecture
\cite[Conj.~A]{Nav04}. In particular, we derive that Conjecture~B holds for
$p$-solvable, sporadic, symmetric and alternating groups, and simple groups of
Lie type in defining characteristic by \cite{Tur13,Nav04,Bru-Nat21,Ruh21}.

It is worth mentioning that Hung speculates that the gap on the continuity of
$p$-rationality might depend on the action of $\norm G P$ on $P/P'$ but does not
know \emph{the precise connection} \cite[Rem.~1.6]{Hun24}. However, both local
conditions in Conjecture~B above are necessary to explain the gap, since the
groups $\sfC_3$ and $\texttt{SmallGroup(216,128)}\cong
(\sfC_3\times\sfC_3 \times\sfC_3)\rtimes{\sf D}_8$ possess
irreducible characters of degree coprime to $3$ and $3$-rationality level 1.

Conjecture B can be seen as a new way of relating the local structure of a group
with its character values. Unlike the phenomena observed in
\cite[Conj.~A]{Nav-Tie19} and \cite[Conj.~1.1]{Hun24}, we note that the absence
of irreducible characters of degree coprime to $p$ of $p$-rationality level~$1$
in a group $G$ is not a property that can be detected in the principal block
(see the end of Section~\ref{sec:normaldefect}). We also offer a version for
principal blocks of Conjecture~B in Conjecture \ref{conj:ppalblock}.
\medskip

Our paper is organized as follows: In Section \ref{sec:2}, we collect some
preliminary results on the $p$-part of character conductors. In
Section~\ref{sec:reduction}, we reduce Conjecture~\ref{conj:Hung} to a problem
on finite simple groups. This problem is settled for the prime~$2$ in
Section~\ref{sec:simple}, thus completing the proof of Theorem A.
Section~\ref{sec:normaldefect} contains the proof of Conjecture B (and its
principal block version) for groups with a normal Sylow $p$-subgroup.

\section{On the $p$-part of character conductors}   \label{sec:2}
For $\al\in\NN$ we denote by $\si_\al\in\Gal(\QQ^\ab/\QQ)$ the Galois
automorphism that fixes $p$-power order roots of unity and sends $p'$-roots of
unity to their $1+p^{\al}$th power. By a slight abuse of notation, we
still denote by $\si_\al$ its restriction to the cyclotomic field $\QQ_n$ for
any positive integer $n$. It is not difficult to see that
$\langle\si_\al\rangle\leq \Gal(\QQ_n/\QQ)$ is a finite $p$-group for any $\al$
and any $n$. Moreover, if $\chi$ is not $p$-rational, then the $p$-rationality
level of $\chi$ is precisely the smallest $\al\geq 1$ such that $\chi$ is
$\si_\al$-invariant (see \cite[Lemma 4.1]{Nav-Tie21}, for instance). While for
$p=2$ the odd-degree 2-rational characters of~$G$ are exactly those fixed by
$\si_1$ by \cite[Thm~2.5]{Val23}, for odd primes we cannot characterise the
$p$-rational characters in terms of the $\si_\al$ simply because
$\Gal(\QQ_{mp^a}/\QQ_m)$ is not a $p$-group.

The celebrated McKay--Navarro conjecture \cite[Conj.~A]{Nav04} implies that
there should exist McKay bijections that preserve the $p$-part of character
conductors. The following is a consequence (of the maximal defect case) of
\cite[Conj.~B]{Nav04}. We state it in the weakest form that suffices for our
purposes. Here, we denote by $\Irr_{p'}(G)$ the set of irreducible characters
of~$G$ whose degree is not divisible by $p$ and by $\Irr(B_0(G))$ the set of
irreducible characters that belong to the principal $p$-block of $G$.

\begin{conj}[Navarro]   \label{conj:navarro}
 Let $G$ be a finite group, $p$ be a prime and $P\in\Syl_p(G)$. There is a
 bijection
 $$^*:\Irr_{p'}(G)\rightarrow\Irr_{p'}(\norm G P)$$
 such that for all $\chi\in\Irr_{p'}(G)$ we have
 \begin{enumerate}[\rm(1)]
    \item $c(\chi)_p=c(\chi^*)_p$, and
    \item $\chi\in\Irr(B_0(G))$ if and only if $\chi^*\in\Irr(B_0(\norm G P))$.
 \end{enumerate}
\end{conj}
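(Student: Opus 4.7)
The plan is to deduce Conjecture~\ref{conj:navarro} from the maximal-defect case of \cite[Conj.~B]{Nav04}. Navarro's refinement of the McKay conjecture postulates, for each $p$-block $B$ of $G$ with defect group $D$, a bijection from the height-zero characters of~$B$ to those of its Brauer correspondent in $\norm G D$ that commutes with a specified subgroup $\mathcal{H}\leq\Gal(\QQ^\ab/\QQ)$ of Galois automorphisms fixing $p$-power roots of unity.

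First I would specialize to blocks of maximal defect, so that $D=P\in\Syl_p(G)$ and the height-zero characters of~$B$ are exactly the $p'$-degree characters of~$B$. Taking the disjoint union of the block-wise bijections as $B$ ranges over the blocks of maximal defect of~$G$ produces a bijection ${}^*\colon\Irr_{p'}(G)\to\Irr_{p'}(\norm G P)$. Since the Brauer correspondence sends $B_0(G)$ to $B_0(\norm G P)$, property~(2) is immediate from this construction (and this is really the only part of the conjecture that is being used for the principal block statement).

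For property~(1), the key point is that each Galois automorphism $\si_\al$ introduced at the start of the section belongs to Navarro's distinguished subgroup $\mathcal{H}$: by definition $\si_\al$ fixes all $p$-power roots of unity and raises $p'$-roots to the power $1+p^\al$, which matches the defining behaviour of elements of $\mathcal{H}$. Hence the bijection ${}^*$ commutes with every $\si_\al$. Combining this with the characterization recalled just above the conjecture---namely that the $p$-rationality level of $\chi$ is the smallest $\al\geq 1$ with $\chi^{\si_\al}=\chi$---one obtains that $\chi$ and $\chi^*$ share the same minimal such $\al$, whence $c(\chi)_p=c(\chi^*)_p$.

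The only genuinely non-formal step is the identification of the $\si_\al$ inside the Galois subgroup appearing in \cite[Conj.~B]{Nav04}; once this is done, the rest of the derivation merely discards the information about non-principal blocks and about Galois automorphisms outside the cyclic groups $\langle\si_\al\rangle$, yielding exactly the weaker form stated here. I expect this identification---essentially a bookkeeping exercise matching conventions, but one that needs care because Navarro's subgroup is usually described in terms of a different set of generators---to be the main (and only) obstacle.
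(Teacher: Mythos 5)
The overall strategy --- specialise \cite[Conj.~B]{Nav04} to blocks of maximal defect, take the disjoint union of the block-wise bijections, and observe that the Brauer correspondence matches principal blocks --- is exactly the intended route, and part~(2) is handled correctly.

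The flaw is in your treatment of part~(1), and it lies precisely in the step you yourself flag as the crux. Your description of Navarro's Galois subgroup is not right: $\mathcal{H}$ is \emph{not} the group of automorphisms fixing $p$-power roots of unity. In \cite{Nav04}, $\mathcal{H}\le\Gal(\QQ_{|G|}/\QQ)$ consists of those $\sigma$ for which there is an integer $e\ge 0$ with $\sigma(\xi)=\xi^{p^e}$ for every $p'$-root of unity $\xi$, with \emph{no} constraint on $p$-power roots of unity. In particular the subgroup $\Gal(\QQ_{|G|}/\QQ_m)$ (with $m=|G|_{p'}$), which acts arbitrarily on $p$-power roots of unity but fixes all $p'$-roots, sits inside $\mathcal{H}$ (take $e=0$); this subgroup is exactly what governs $c(\chi)_p$, so $\mathcal{H}$-equivariance does yield $c(\chi)_p=c(\chi^*)_p$. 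Note also that with the two definitions you quote the claim that $\si_\al$ ``matches the defining behaviour of elements of $\mathcal{H}$'' cannot be right on its own terms: an automorphism raising $p'$-roots to the $(1+p^\al)$-th power does not raise them to a power of $p$, since $1+p^\al$ is a unit mod~$p$.

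The reason the conclusion $\si_\al\in\mathcal{H}$ is nonetheless salvageable is that the definition of $\si_\al$ in the source text has the roles of $p$ and $p'$ reversed (compare the use $\la^{\si_1}=\la^{p+1}$ for $\la\in\Irr(P/P')$, a character of a $p$-group, in the proof of Lemma~\ref{lem:frattini}): the intended $\si_\al$ fixes $p'$-roots of unity and sends $p$-power roots to their $(1+p^\al)$-th power. With that corrected, $\si_\al\in\mathcal{H}$ for the right reason --- it fixes $p'$-roots, i.e.\ raises them to the $p^0$-th power --- and your argument for part~(1) goes through. So you identified the correct dependency and the correct key fact, but justified it with a mischaracterisation of $\mathcal{H}$ that, taken literally, would make the argument fail; be careful to match Navarro's actual definition.
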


It is not difficult to see that Conjecture~\ref{conj:Hung} is a consequence of
Conjecture~\ref{conj:navarro} (and hence of \cite[Conj.~B]{Nav04}). We do so in
Lemma \ref{lem:ConA-Nav} below. We will prove in Section~\ref{sec:normaldefect}
that Conjecture~B (as well as its principal block version) also follows from
Conjecture~\ref{conj:navarro}; however, this will require significantly more
effort.
\medskip

We next collect some useful results on the $p$-part of character conductors.
Throughout, $G$ denotes a finite group.

\begin{lem}   \label{lem:p' index}
 Let $P\in\Syl_p(G)$, $H\leq G$ with $p\nmid |G:H|$ and $\cent G P\sbs H$,
 and let $\al\geq 1$. If $\psi\in\Irr_{0}(B_0(H))$ is fixed by $\si_\al$ then
 there is a $\si_\al$-invariant constituent of $\psi^G$ in $\Irr_{p'}(B_0(G))$.
 In other words, if $c(\psi)_p\ge p$ then there is a constituent $\chi$ of
 $\psi^G$ with $c(\chi)_p\leq c(\psi)_p$.
\end{lem}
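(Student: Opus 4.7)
The plan is to project $\psi^G$ onto the principal block of $G$ and then run a Galois orbit-parity count on the projection. Since $\sigma_\alpha$ has $p$-power order on every cyclotomic extension (as recalled just above), all its orbits on $\Irr(G)$ have $p$-power size; moreover $\sigma_\alpha$ permutes the blocks of $G$ and in particular stabilizes $\Irr(B_0(G))$ (the block containing the rational character $1_G$). Because $\psi$ is $\sigma_\alpha$-fixed so is $\psi^G$, and hence so is its principal-block projection
\[
\eta \;:=\; \sum_{\chi \in \Irr(B_0(G))} [\psi^G,\chi]\,\chi.
\]
Also $\psi^G(1) = [G:H]\,\psi(1)$ is coprime to $p$ by the hypotheses on $[G:H]$ and on $\psi\in\Irr_0(B_0(H))$.

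The crux is to establish $p \nmid \eta(1)$; equivalently, that $\psi^G - \eta$ has degree divisible by $p$. Constituents of $\psi^G$ lying in blocks of non-maximal defect automatically satisfy $p\mid\chi(1)$, and so contribute $p$-divisibly. For $p'$-degree constituents in non-principal maximal-defect blocks I would invoke Brauer's Third Main Theorem: the hypothesis $\cent G P \leq H$ makes $B_0(H)^G = B_0(G)$ via block induction, and the compatibility of induction with the block correspondence (applied to a $p'$-degree $\chi\in\Irr(G)$ with $[\chi_H,\psi]\neq 0$ and $\psi\in B_0(H)$) forces the block of such a $\chi$ to coincide with $B_0(H)^G = B_0(G)$. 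Consequently all $p'$-degree constituents of $\psi^G$ lie in $B_0(G)$, so $\psi^G - \eta$ has degree $\equiv 0 \pmod p$ and $\eta(1) \equiv \psi^G(1) \not\equiv 0 \pmod p$.

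Finally, grouping the constituents of $\eta$ into $\sigma_\alpha$-orbits on $\Irr(B_0(G))$, every orbit of size $\geq p$ (the only nontrivial possibility, since $\sigma_\alpha$ has $p$-power order) contributes a multiple of $p$ to $\eta(1)$. Thus
\[
\sum_{\chi \in \Irr(B_0(G))^{\sigma_\alpha}} [\psi^G,\chi]\,\chi(1) \;\equiv\; \eta(1) \;\not\equiv\; 0 \pmod p,
\]
so some $\sigma_\alpha$-fixed $\chi\in\Irr(B_0(G))$ satisfies $p\nmid [\psi^G,\chi]\,\chi(1)$. Such a $\chi$ is a constituent of $\psi^G$ lying in $\Irr_{p'}(B_0(G))^{\sigma_\alpha}$, as required.

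The main obstacle is the block-theoretic claim that no $p'$-degree constituent of $\psi^G$ falls outside $B_0(G)$: this is where the hypothesis $\cent G P \leq H$ is essential, via the Brauer correspondence. The Galois orbit-parity count and the block projection setup are otherwise routine.
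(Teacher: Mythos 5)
Your high-level strategy matches the paper's (which simply defers to \cite[Lemma~3.7]{Mar-Mar-Sch-Val24}): project $\psi^G$ onto $B_0(G)$, note that $\si_\al$ has $p$-power order so its orbits on $\Irr(B_0(G))$ have $p$-power size, and run a counting argument modulo $p$ on the degree of the projection. You also correctly identify the crux: showing the $B_0(G)$-part of $\psi^G$ has degree prime to~$p$. The orbit-parity count itself is sound.

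The gap is in your block-theoretic justification of that crux. You claim that \emph{every} $p'$-degree constituent of $\psi^G$ must lie in $B_0(G)$, citing Brauer's Third Main Theorem plus ``compatibility of induction with the block correspondence.'' There is no such compatibility: if $\psi\in\Irr(b)$ with $b^G$ defined and $\chi$ is an irreducible constituent of $\psi^G$, it does \emph{not} follow that $\chi\in\Irr(b^G)$. Brauer's Third Main Theorem tells you $B_0(H)^G=B_0(G)$ (and, given $\cent GP\sbs H$, that block induction from $B_0(H)$ is defined), but it says nothing about which blocks the constituents of $\psi^G$ fall into. Your stronger structural claim is not a consequence of the tools you invoke, and I would not expect it to hold in general.

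What is true, and what the argument actually requires, is only the \emph{numerical} statement
\[
\sum_{\chi\in\Irr(B_0(G))}[\psi^G,\chi]\,\chi(1)\;\equiv\;|G:H|\,\psi(1)\;\not\equiv\;0\pmod p,
\]
which follows from the Brauer/Nagao-type degree congruence associated to block induction (see \cite[Ch.~4]{Nav98}): when $b^G$ is defined, the contribution of each block $B\ne b^G$ to $\psi^G(1)$ is divisible by $p$ relative to $\psi(1)$, while the $b^G$-contribution is not. Note this is strictly weaker than ``no $p'$-degree constituent lies outside $B_0(G)$'' — there can be $p'$-degree constituents in other full-defect blocks, so long as their total multiplicity-weighted contribution is divisible by $p$. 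Replacing your structural claim by this congruence (with the correct reference) repairs the argument; the rest of your proof then goes through.
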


\begin{proof}
Mimic the proof of \cite[Lemma 3.7]{Mar-Mar-Sch-Val24} for $\si_\al$ instead of
$\si$, using that $\si_\al$ also has $p$-power order.
\end{proof}

\begin{lem}  \label{lem:constituents have level 2 or more}
 Let $N\normal G$ and $P\in\Syl_p(G)$. Let $\theta\in\Irr_{p'}(B_0(N))$ be
 $P$-invariant and with $c(\theta)_p\geq p^2$, and let
 $\chi\in\Irr_{p'}(B_0(G))$ lie over $\theta$.
 Then $c(\chi)_p\geq p^2$.
\end{lem}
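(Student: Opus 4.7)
The plan is to argue by contradiction, supposing that $\sigma_1$ fixes $\chi$ (equivalently $c(\chi)_p\le p$) and deriving that it must then also fix $\theta$. First I would invoke Clifford's theorem applied to the normal subgroup $N\normal G$: since $\chi$ lies over $\theta$, the irreducible constituents of $\chi_N$ are precisely the $G$-conjugates of $\theta$. Applying $\sigma_1$ to this decomposition and using $\sigma_1(\chi)=\chi$, the character $\sigma_1(\theta)$ must itself be one of these constituents, so $\sigma_1(\theta)=\theta^g$ for some $g\in G$.

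Next I would exploit the $P$-invariance of $\theta$. Since $P\sbs G_\theta$, the $G$-orbit of $\theta$ has cardinality $|G:G_\theta|$ coprime to $p$. As recalled at the start of this section, $\langle\sigma_1\rangle$ is a $p$-group, and by the previous step it acts on this orbit via $\theta^g\mapsto\sigma_1(\theta)^g$. A $p$-group acting on a set of $p'$-cardinality must have a fixed point, so there exists $h\in G$ with $\sigma_1(\theta^h)=\theta^h$.

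Finally, Galois action and $G$-conjugation commute on $\Irr(N)$, because conjugation only permutes the arguments at which a character is evaluated while $\sigma_1$ acts only on its complex values. Therefore $\sigma_1(\theta^h)=\sigma_1(\theta)^h$, and a Galois-fixed element of the orbit forces $\sigma_1(\theta)=\theta$, yielding $c(\theta)_p\le p$ and contradicting the hypothesis $c(\theta)_p\ge p^2$.

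The argument is a straightforward combination of Clifford theory with the standard principle that a $p$-group acting on a set of $p'$-cardinality has a fixed point, so I do not foresee a genuine obstacle. It is perhaps worth remarking that the hypotheses $\chi\in\Irr(B_0(G))$ and $\theta\in\Irr(B_0(N))$ play no role in this reasoning; they are presumably included in order to match the principal-block setting in which the lemma will subsequently be applied.
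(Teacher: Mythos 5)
Your proof is correct and follows essentially the same argument as the paper's: assume $\si_1$ fixes $\chi$, deduce that $\si_1$ permutes the $G$-orbit of $\theta$ (which has $p'$-size), invoke the $p$-group fixed-point principle, and conclude $\si_1$ fixes $\theta$, contradicting $c(\theta)_p\geq p^2$. Your closing remark that the principal-block hypotheses are not used in the argument is also accurate.
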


\begin{proof}
Assume that $c(\chi)_p\leq p$ so that $\chi$ is fixed by $\si_1$. Then $\chi_N$
is fixed by $\si_1$. This implies that $\si_1$ permutes the set of
$G$-conjugates of $\theta$, which has size not divisible by~$p$. In particular,
$\si_1$ fixes some $G$-conjugate of $\theta$ so it fixes $\theta$, and
$c(\theta)_p\leq p$, a contradiction. 
\end{proof}

\begin{thm}[Alperin--Dade]   \label{thm:Alperin-Dade}
 Let $N\normal G$ with $|G:N|$ not divisible by $p$. Assume that $G=N\cent G P$
 for $P\in\Syl_p(G)$. Then restriction defines a bijection
 $\Irr(B_0(G))\to\Irr(B_0(N))$. In particular, $c(\chi)_p=c(\chi_N)_p$ for
 every $\chi\in\Irr(B_0(G))$.
\end{thm}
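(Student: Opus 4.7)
The plan is to treat this as the classical Alperin--Dade theorem for the principal block, with the conductor statement as a clean Galois-theoretic consequence. Since $|G:N|$ is a $p'$-number, $P\sbs N$, so $P\in\Syl_p(N)$ is a defect group of $B_0(N)$, and the hypothesis $G=N\cent G P$ is precisely the block-centraliser condition required to lift $B_0(N)$ uniquely to a block of $G$, which must then be $B_0(G)$.

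For the bijection I would proceed in three steps. First, I would show that every $\theta\in\Irr(B_0(N))$ is $\cent G P$-invariant, and hence $G$-invariant in view of $G=N\cent G P$ and the automatic $N$-invariance of $\theta$. This is the conceptual heart of Alperin--Dade and requires genuine block-theoretic input: typically via central characters modulo a maximal ideal above~$p$, or via Brauer pairs, showing that $\cent G P$ fixes $B_0(N)$ and each of its ordinary constituents. Second, for each such $G$-invariant $\theta$ I would produce a unique extension $\chi\in\Irr(G)$ lying in $B_0(G)$: the $|G/N|$ Gallagher extensions of $\theta$ exist because $\theta$ is $G$-invariant and $|G:N|$ is coprime to $p$, and among them exactly one lies in $B_0(G)$, using that the principal block is the unique block of $G$ covering $B_0(N)$. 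Third, Clifford theory closes the argument: for any $\chi\in\Irr(B_0(G))$ we have $\chi_N=e\sum_i\theta_i$ with the $\theta_i\in\Irr(B_0(N))$ all $G$-conjugate; by the first step the $\theta_i$ are all equal, and by the uniqueness in the second step $e=1$, giving $\chi_N=\theta$.

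For the conductor equality, $\si_\al$ fixes the trivial character and therefore stabilises the principal block setwise, while restriction commutes with the Galois action on character values. Hence the bijection $\chi\mapsto\chi_N$ is $\si_\al$-equivariant for every $\al\geq 1$, so $\chi^{\si_\al}=\chi$ if and only if $(\chi_N)^{\si_\al}=\chi_N$; the characterisation of the $p$-rationality level recalled at the start of this section then yields $c(\chi)_p=c(\chi_N)_p$.

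The main obstacle is the first step: the $\cent G P$-invariance of characters in $B_0(N)$. While morally clear --- $\cent G P$ acts trivially on everything attached to~$P$ --- it is not an elementary Clifford-theoretic statement and requires genuine block theory, which is why the bijection is a substantive contribution of Alperin and Dade rather than a formal consequence of the hypothesis.
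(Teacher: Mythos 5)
The paper does not prove the bijection itself: it simply cites Alperin \cite{Alp76} for solvable $G/N$ and Dade \cite{Dad77} in general, then observes that restriction is Galois-equivariant to get the conductor equality. Your final paragraph on conductors matches the paper's argument and is fine, but your attempt to reconstruct the Alperin--Dade bijection has genuine gaps beyond the one you flag in Step 1.

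In Step 2, the assertion that the Gallagher extensions of $\theta$ exist ``because $\theta$ is $G$-invariant and $|G:N|$ is coprime to $p$'' is not a valid extension criterion: $G$-invariance plus $p'$-index does not make the obstruction class in $H^2(G/N,\mathbb{C}^\times)$ vanish, and no theorem in the literature grants extension on those hypotheses alone. (The standard results require $(|G:N|,o(\theta)\theta(1))=1$, cyclic or otherwise controlled $G/N$, etc., none of which hold here.) The existence of an extension of $\theta$ inside $B_0(G)$ is part of the content of Alperin--Dade, not an input to it. Also, the claim that $B_0(G)$ is the \emph{unique} block of $G$ covering $B_0(N)$ is simply false under the given hypotheses: take $G=N\times C$ with $C$ a non-trivial $p'$-group, where $G=N\cent_G(P)$ holds automatically, yet every $\gamma\in\Irr(C)$ contributes a distinct block of $G$ covering $B_0(N)$. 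Even granting uniqueness, it would not by itself deliver that exactly one extension lies in $B_0(G)$, nor that the ramification index $e$ in Step 3 is $1$; that last point requires knowing $|\Irr(B_0(G))|=|\Irr(B_0(N))|$ or an algebra isomorphism, i.e.\ more block theory than Step 1 alone. In short, the theorem is not a Clifford-theoretic formality once Step 1 is black-boxed, and the paper quite reasonably cites it rather than proving it.
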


\begin{proof}
The first part was proved by Alperin \cite{Alp76} when $G/N$ is solvable, and by
Dade \cite{Dad77} without the solvability assumption. Since restriction commutes
with Galois action, it follows that $\chi$ is fixed by $\si_\al$ if and only
if $\chi_N$ is fixed by $\si_\al$ for any $\al$.
\end{proof}

\begin{lem}   \label{lem:Hung 2.4}
 Let $N\normal G$ with $|G:N|$ not divisible by $p$. Let $\theta\in\Irr(N)$,
 $\chi\in\Irr(G|\theta)$ and assume $c(\chi)_p\geq p$. Then
 $c(\theta)_p\leq c(\chi)_p$. If, moreover, $c(\chi)_p$ or $c(\theta)_p$ are
 $\geq p^2$, then $c(\chi)_p=c(\theta)_p$.
\end{lem}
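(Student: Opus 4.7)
For Part 1, I would write $c(\chi)_p=p^\al$ with $\al\geq 1$, so that $\si_\al$ fixes $\chi$ and hence $\chi_N=e\sum_i\theta^{g_i}$; thus $\si_\al$ permutes the $G$-orbit of $\theta$. This orbit has size $|G:G_\theta|$, which divides $|G:N|$ and is therefore coprime to $p$, while $\langle\si_\al\rangle$ is a finite $p$-group on any relevant cyclotomic field by the remarks at the start of Section~\ref{sec:2}. A $p$-power-order action on a $p'$-set must have a fixed point, so $\si_\al$ fixes some $G$-conjugate $\theta^g$; since $G$-conjugation preserves character fields, $c(\theta)_p=c(\theta^g)_p\leq p^\al=c(\chi)_p$.

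For Part 2, via Part 1 the case $c(\theta)_p\geq p^2$ is covered by the case $c(\chi)_p\geq p^2$, so I would assume $c(\chi)_p=p^\al$ with $\al\geq 2$ and aim for $c(\theta)_p\geq p^\al$. Suppose for contradiction $c(\theta)_p\leq p^{\al-1}$; since $\al-1\geq 1$, $\si_{\al-1}$ fixes $\theta$, and I will show that it also fixes $\chi$, contradicting $c(\chi)_p=p^\al$.

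By Clifford correspondence, write $\chi=\psi^G$ for a unique $\psi\in\Irr(G_\theta|\theta)$. Induction commutes with Galois action, and because $\si_{\al-1}(\theta)=\theta$ a short verification gives $\si_{\al-1}(\chi)=\chi$ iff $\si_{\al-1}(\psi)=\psi$; so I may assume $\theta$ is $G$-invariant. Passing through a character triple isomorphism to reduce to a cyclic central triple (with $\theta$ linear faithful) and invoking Schur--Zassenhaus on the resulting $p'$-complement, $\theta$ extends to some $\tilde\chi\in\Irr(G)$; Gallagher's theorem then identifies $\Irr(G|\theta)=\{\mu\tilde\chi:\mu\in\Irr(G/N)\text{ linear}\}$, a set of size $|(G/N)^{\ab}|$ coprime to $p$. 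Writing $\si_{\al-1}(\tilde\chi)=\nu\tilde\chi$ for some linear $\nu\in\Irr(G/N)$ and iterating (using that $\si_{\al-1}$ fixes every linear character of $G/N$, since such characters take values in $p'$-roots of unity), one obtains $\si_{\al-1}^k(\tilde\chi)=\nu^k\tilde\chi$ for all $k$. Since $\si_{\al-1}$ has $p$-power order in the relevant Galois group while $\nu$ has $p'$-order, $\nu=1$, and so $\si_{\al-1}$ fixes every $\mu\tilde\chi$, including $\chi$.

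The main obstacle I foresee is the extension step: cleanly justifying that $\theta$ extends in the $G$-invariant, $p'$-index setting. Reducing via a character triple isomorphism to a cyclic central setting and applying Schur--Zassenhaus is the natural strategy, but preserving the $p$-part of character fields across the isomorphism is delicate and may require a Galois-compatible version of the triple isomorphism, as developed by Navarro and Turull.
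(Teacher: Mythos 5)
The paper gives no proof of its own here: the statement is taken verbatim from \cite[Lemma~2.4(i) and (iii)]{Hun24} and the ``proof'' is just that citation. So there is no internal argument to compare against, only your blind reconstruction.

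Your treatment of the first assertion is correct and is the expected one: since $c(\chi)_p=p^\al$ with $\al\geq1$, $\si_\al$ fixes $\chi$ and therefore permutes the $G$-orbit of $\theta$; that orbit has size $|G:G_\theta|$ dividing $|G:N|$, hence coprime to $p$, and $\langle\si_\al\rangle$ is a $p$-group on the relevant cyclotomic field, so some (hence, after conjugating, every) $G$-conjugate of $\theta$ is fixed by $\si_\al$, giving $c(\theta)_p\leq p^\al$. This is exactly the kind of fixed-point argument the paper uses elsewhere (e.g.\ in Lemma~\ref{lem:constituents have level 2 or more}).

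For the second assertion there is a genuine gap, and it is precisely where you suspect it. The Clifford-correspondence reduction to $\theta$ being $G$-invariant is fine (uniqueness of the Clifford correspondent over the $\si_{\al-1}$-fixed $\theta$ gives $\si_{\al-1}(\chi)=\chi$ iff $\si_{\al-1}(\psi)=\psi$). But the subsequent claim that $\theta$ then extends to $G$ is false in general, and the route you propose does not repair it. In the cyclic central character triple $(G^*,N^*,\theta^*)$ one can arrange $|N^*|$ to divide $\exp\!\bigl(H^2(G/N,\mathbb{C}^*)\bigr)$, which divides $|G/N|$; thus $|N^*|$ is coprime to $p$, but it is \emph{not} in general coprime to $|G^*/N^*|=|G/N|$ --- both are $p'$-numbers and may share prime divisors. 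Schur--Zassenhaus therefore gives no complement, and $N^*\cap(G^*)'$ need not be trivial, so $\theta^*$ (equivalently $\theta$) need not extend. A concrete instance of the failure of the extension claim: take $p=3$, $G=Q_8$, $N=\bZ(Q_8)$, $\theta$ the faithful linear character of $N$; here $|G:N|=4$ is coprime to $3$ and $\theta$ is $G$-invariant but does not extend. Once the extension (and with it Gallagher's parametrisation $\Irr(G\mid\theta)=\{\mu\tilde\chi\}$) is unavailable, the iteration $\si_{\al-1}^k(\tilde\chi)=\nu^k\tilde\chi$ that drives your contradiction is not even defined.

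The repair you sketch at the end is indeed the right idea: one wants a $\si_{\al-1}$-equivariant bijection $\Irr(G\mid\theta)\to\Irr(G^*\mid\theta^*)$; since $G^*$ is a $p'$-group and $\si_{\al-1}$ fixes all $p'$-roots of unity, $\si_{\al-1}$ then fixes every member of $\Irr(G^*\mid\theta^*)$ and hence of $\Irr(G\mid\theta)$, with no need for $\theta$ to extend. But producing a Galois-compatible character triple isomorphism is precisely the nontrivial ingredient, and it is not supplied. As written, the proof of the second assertion is incomplete; the first assertion is fine.
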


\begin{proof}
See \cite[Lemma 2.4(i) and (iii)]{Hun24}.
\end{proof}

\begin{lem}   \label{lem:Hung 3.3}
 Let $\vhi,\psi\in\Irr(G)$. Assume $\chi:=\vhi\psi\in\Irr(G)$ and
 $c(\vhi)_p\leq c(\chi)_p$. Assume further that $G/\ker\psi$ is a $p$-group.
 Then
 \begin{enumerate}[\rm(a)]
  \item $c(\psi)_p\leq c(\chi)_p$, and
  \item either $c(\vhi)_p = c(\chi)_p$ or $c(\psi)_p=c(\chi)_p$.
 \end{enumerate}
\end{lem}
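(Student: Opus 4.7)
My plan is to use a family of Galois automorphisms that detect the $p$-part of the conductor and to argue by contradiction for both parts. For each integer $\mu\ge 0$, let $\tau_\mu\in\Gal(\QQ^\ab/\QQ)$ be the automorphism that fixes every root of unity of order coprime to $p$, satisfies $\tau_\mu(\zeta_{p^k})=\zeta_{p^k}$ for $k\le\mu$, and $\tau_\mu(\zeta_{p^k})=\zeta_{p^k}^{1+p^\mu}$ for $k>\mu$. A standard check (the fixed field of $\tau_\mu$ in $\QQ^\ab$ is the compositum of $\QQ_{p^\mu}$ with all $\QQ_m$ for $(m,p)=1$) shows that $\tau_\mu$ fixes $\theta\in\Irr(G)$ if and only if $c(\theta)_p\le p^\mu$. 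I set $p^\al:=c(\chi)_p$ and $p^\ga:=c(\vhi)_p$; since $G/\ker\psi$ is a $p$-group, every value of $\psi$ is a sum of $p$-power roots of unity, so $c(\psi)$ is a $p$-power, which I denote $p^\be$. The hypothesis gives $\ga\le\al$.

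I would prove~(b) first, as it is the easy half. Assume for contradiction that both $\ga<\al$ and $\be<\al$. Then $\tau_{\al-1}$ fixes both $\vhi$ and $\psi$, hence fixes $\chi=\vhi\psi$; by the characterization above this forces $c(\chi)_p\le p^{\al-1}$, contradicting $c(\chi)_p=p^\al$. Therefore $\ga=\al$ or $\be=\al$, proving (b). Note that this argument uses neither the $p$-group hypothesis on $G/\ker\psi$ nor the hypothesis $\ga\le\al$.

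For~(a), I argue by contradiction and suppose $\be>\al$. Then $\tau_\al$ fixes both $\vhi$ and $\chi$ (since $\ga,\al\le\al$), but not $\psi$. Applying $\tau_\al$ to the identity $\chi=\vhi\psi$ of class functions yields $\chi=\vhi\,\psi^{\tau_\al}$, hence
\[
\vhi\cdot(\psi-\psi^{\tau_\al})=0 \quad\text{on } G;
\]
equivalently, $\psi(g)=\psi^{\tau_\al}(g)$ whenever $\vhi(g)\neq 0$. The crucial step, and the main obstacle of the proof, is to upgrade this partial equality to the global equality $\psi=\psi^{\tau_\al}$, which would contradict $\be>\al$. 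It is precisely here that the hypothesis $Q:=G/\ker\psi$ being a $p$-group will be used: $\psi^{\tau_\al}$ is another irreducible character of~$Q$, and the strategy is to multiply the displayed relation by~$\bar\vhi$ to obtain $|\vhi|^2(\psi-\psi^{\tau_\al})=0$, then combine this with the orthogonality relation $[|\vhi|^2,|\psi|^2]=1$ (which follows from the irreducibility of $\chi=\vhi\psi$, as the only common irreducible constituent of $|\vhi|^2$ and $|\psi|^2$ is the trivial character), together with the explicit description of how $\tau_\al$ permutes the conjugacy classes of $Q$ (fixing every class of an element of order at most~$p^\al$), to force $\psi=\psi^{\tau_\al}$. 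I expect this last step to be the main technical difficulty; the $p$-group hypothesis cannot be dropped here, as in general tensor cancellation for a non-faithful character $\vhi$ can fail, and one really needs that $\psi$ and $\psi^{\tau_\al}$ live in the $p$-group quotient to squeeze the conclusion out of $\vhi\cdot(\psi-\psi^{\tau_\al})=0$.
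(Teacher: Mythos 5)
The paper does not prove Lemma~\ref{lem:Hung 3.3}; it simply cites Hung's paper, so there is no in-paper argument to compare against. Taking your proposal on its own terms, part~(b) is essentially right but part~(a) has a genuine gap.

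On part~(b): the reasoning is correct in outline, but the claimed characterization ``$\tau_\mu$ fixes $\theta$ if and only if $c(\theta)_p\le p^\mu$'' is false in two ranges you would actually use. For $\mu=0$ and $p$ odd, $\tau_0$ raises $p$-power roots of unity to the power $1+p^0=2$, and $2$ does not generate $(\mathbb{Z}/p^k\mathbb{Z})^\times$ for most odd $p$ (e.g.\ $p=7$), so $\tau_0$ fixes strictly more than the $p$-rational characters. For $p=2$ and $\mu=1$, the fixed field of $\tau_1$ inside $\QQ_{2^k}$ ($k\ge3$) is $\QQ(\sqrt{-2})$, whose conductor has $2$-part $8$, not $2$; so again $\tau_1$-invariance does not imply $c(\theta)_2\le 2$. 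Both failures are harmless here after a direct patch (in the relevant degenerate cases $\vhi$ and $\psi$ are forced to be $p$-rational, so $\chi$ is $p$-rational directly), but as written the step ``$\tau_{\al-1}$ fixes $\chi$ forces $c(\chi)_p\le p^{\al-1}$'' is not justified when $\al=1$ (odd $p$) or $\al=2$ ($p=2$).

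On part~(a): you correctly reduce to showing that the identity $\vhi\cdot(\psi-\psi^{\tau_\al})=0$ of class functions forces $\psi=\psi^{\tau_\al}$, but you explicitly leave this step open (``I expect this last step to be the main technical difficulty''), so the proposal is not a proof. Moreover, the strategy you sketch does not close the gap as stated. Multiplying by $\bar\vhi$ and integrating against $\bar\psi$ gives
\[
[\,\vhi\bar\vhi,\ \bar\psi\,\psi^{\tau_\al}\,]\;=\;[\,\vhi\bar\vhi,\ \psi\bar\psi\,]\;=\;1,
\]
so $\vhi\bar\vhi$ and $\bar\psi\psi^{\tau_\al}$ have a common irreducible constituent; if $\psi\ne\psi^{\tau_\al}$ that constituent is nontrivial and factors through the $p$-group $Q=G/\ker\psi$. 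The orthogonality $[|\vhi|^2,|\psi|^2]=1$ only says that the unique common constituent of $\vhi\bar\vhi$ and $\psi\bar\psi$ is $1_G$; it says nothing by itself about whether $\vhi\bar\vhi$ can share a nontrivial constituent with $\bar\psi\psi^{\tau_\al}$. (Note that $\vhi\eta=\vhi\eta'$ with $\eta\ne\eta'$ linear does happen in general, e.g.\ in $\fS_3$, so one really must use the specific hypotheses to exclude this.) Without supplying the missing argument, part~(a)---and hence the lemma---remains unproven.
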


\begin{proof}
See \cite[Lemma 3.3]{Hun24}.
\end{proof}

If $G$ is a $p$-group and $\la\in\Irr_{p'}(G)$, then $G/\ker\la$ is cyclic and
$c(\la)_p=|G/\ker \la|$. In particular, Conjecture~\ref{conj:Hung} trivially
holds for $p$-groups as these have a unique $p$-block, namely the principal
one. We prove below that Conjecture~\ref{conj:Hung} follows from
Conjecture~\ref{conj:navarro}.

\begin{lem}   \label{lem:ConA-Nav}
 Suppose that $G$ satisfies the conclusion of Conjecture \ref{conj:navarro} for
 $p$. Then if the principal $p$-block $B_0(G)$ of $G$ contains irreducible
 characters of degree coprime to $p$ with $c(\chi)_p=p^\al$ then it contains
 irreducible characters of degree coprime to $p$ and $c(\psi)_p=p^\beta$ for
 all $2\leq\beta\leq\al$.
\end{lem}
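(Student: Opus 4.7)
The plan is to apply the bijection $^*$ from Conjecture~\ref{conj:navarro} to transfer the question to $H := \norm G P$, where the Sylow $p$-subgroup $P$ is normal, and to construct characters of the required intermediate $p$-rationality levels directly in $H$ via Clifford theory together with Brauer's third main theorem.

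\textbf{First}, I would set $\eta := \chi^* \in \Irr_{p'}(B_0(H))$, so $c(\eta)_p = p^\al$, and assume $\al \geq 2$ (otherwise there is nothing to prove). Applying Lemma~\ref{lem:Hung 2.4} to $P \normal H$ (of index coprime to $p$), $\eta$ lies over some $\theta \in \Irr(P)$ with $c(\theta)_p = p^\al$; since $\eta(1)$ is coprime to $p$ while $\theta(1)$ is a $p$-power dividing $\eta(1)$, the character $\theta$ must be linear, and hence of order~$p^\al$.

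\textbf{Next}, for each $\beta$ with $2 \leq \beta \leq \al$, set $\theta_\beta := \theta^{p^{\al-\beta}}$, a linear character of $P$ of order $p^\beta$. A Schur--Zassenhaus complement of $P$ in $I_\beta := I_H(\theta_\beta)$ furnishes a linear extension $\hat\theta_\beta$ of $\theta_\beta$ to $I_\beta$ of order exactly $p^\beta$ (declare it trivial on the complement). Then $\eta_\beta := \hat\theta_\beta^H$ is irreducible by Clifford, has $p'$-degree $[H:I_\beta]$, and satisfies $c(\eta_\beta)_p = p^\beta$ by Lemma~\ref{lem:Hung 2.4}, once one checks that $\eta_\beta$ is not $p$-rational (which rules out $c(\eta_\beta)_p < p$ and forces equality with $c(\theta_\beta)_p = p^\beta$).

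\textbf{The main step} will be verifying that $\eta_\beta \in B_0(H)$. Since $\hat\theta_\beta$ has $p$-power order, all of its values are $p^\beta$-th roots of unity, each congruent to $1$ modulo a prime ideal above $p$; hence a direct comparison of central characters on the $p$-regular classes of $I_\beta$ shows $\hat\theta_\beta \in \Irr(B_0(I_\beta))$. Because $\cent H P \subseteq I_\beta$ (as $\cent H P$ fixes every character of $P$), Brauer's third main theorem yields $B_0(I_\beta)^H = B_0(H)$, so every irreducible constituent of $\hat\theta_\beta^H$---and in particular $\eta_\beta$---lies in $B_0(H)$. Finally, applying the inverse of $^*$ to $\eta_\beta$ produces the desired $\psi \in \Irr_{p'}(B_0(G))$ with $c(\psi)_p = p^\beta$, completing the proof.
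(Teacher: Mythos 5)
Your proposal is correct in substance and reaches the same conclusion, but it takes a more hands-on route than the paper. After using the bijection to pass to $H=\norm GP$ (the paper phrases this as "we may assume $P\normal G$", which is the same move), both proofs pull out a linear $\theta\in\Irr(P)$ of conductor $p^\al$ under $\chi^*$ and take suitable powers $\theta_\beta$. The divergence is in how the character of $B_0$ over $\theta_\beta$ is produced: the paper simply cites \cite[Thm~9.4]{Nav98} to obtain $\chi_\beta\in\Irr(B_0)$ over $\la_\beta$ directly, whereas you build it explicitly as $\hat\theta_\beta^H$ via Schur--Zassenhaus and Clifford induction and then locate it in $B_0(H)$ via block-theoretic arguments. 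Your construction is more elementary and makes the character quite concrete, at the cost of having to verify irreducibility and the block membership by hand.

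Two small points deserve attention. First, the step you flag yourself --- that $\eta_\beta$ is not $p$-rational, so that Lemma~\ref{lem:Hung 2.4} is applicable --- does need an argument. It can be supplied either by Lemma~\ref{lem:constituents have level 2 or more} of the paper (taking $N=P$, noting $\theta_\beta$ is $P$-invariant and $c(\theta_\beta)_p\ge p^2$), or directly: $\mathbb{Q}(\eta_\beta)\subseteq\QQ_{p^\beta}$, and if $\eta_\beta$ were rational then $\Gal(\QQ_{p^\beta}/\QQ)$ would permute the $H$-conjugates of $\theta_\beta$; but the Galois orbit of $\theta_\beta$ has size $p^{\beta-1}(p-1)$, divisible by $p$, while the $H$-orbit has size dividing $|H:P|$, prime to $p$. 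Second, the inference "Brauer's third main theorem yields $B_0(I_\beta)^H=B_0(H)$, so every irreducible constituent of $\hat\theta_\beta^H$ lies in $B_0(H)$" is not by itself valid: one must first know that the block of the Clifford correspondent $\eta_\beta$ is the induced block $B_0(I_\beta)^H$, which is the Fong--Reynolds correspondence and should be cited. Alternatively, since your $\hat\theta_\beta$ is trivial on the Hall $p'$-subgroup $L$ of $I_\beta$, and $\oh{p'}{H}\le L$ as a normal $p'$-subgroup of $I_\beta$, one gets $\oh{p'}{H}\le\ker(\eta_\beta)$ directly, whence $\eta_\beta\in\Irr(H/\oh{p'}H)=\Irr(B_0(H))$ by \cite[Thm~10.20]{Nav98} --- a cleaner path given your choice of extension. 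With these two points shored up the proof is complete.
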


\begin{proof}
Let $P\in \Syl_p (G)$. By hypothesis, we can assume that $P\normal G$.
Let $\chi \in \Irr_{p'}(B_0(G))$ with $c(\chi)_p=p^\alpha$, and let
$\la \in \Irr( P)$ be under $\chi$. Since $\alpha\geq 2$,
by Lemma \ref{lem:Hung 2.4}, we have that $c(\la)=c(\chi)_p$. Now, $P$ has
a linear character $\la_\beta$ of conductor $p^\beta$ for every
$2\leq \beta \leq \alpha$, as noticed just before the statement of this lemma.
We can take $\chi_\beta \in \Irr(B_0(G))$ over $\la_\beta$ by
\cite[Thm~9.4]{Nav98}. Then $\chi_\beta$ has degree coprime to $p$ and
$c(\chi_\beta)=p^\beta$ using again Lemma~\ref{lem:Hung 2.4}. 
\end{proof}

\section{A reduction theorem for Hung's conjecture}   \label{sec:reduction}

The purpose of this section is to show that if certain conditions hold for
almost/quasi-simple groups and a prime $p$, then Conjecture \ref{conj:Hung}
holds for all finite groups with respect to the prime $p$. 
More specifically, our reduction theorem depends on the following conjecture
concerning the representation theory of the finite non-abelian simple groups.

\begin{conj}   \label{conj:simple}
 Let $p$ be a prime and let $S$ be a finite non-abelian simple group of order
 divisible by $p$. Then the following hold:
 \begin{enumerate}[\rm(a)]
  \item Let $X$ be a quasi-simple group with $X/\bZ(X)\cong S$ and $\bZ(X)$ of
   order coprime to $p$. Let $Y$ be almost quasi-simple with component
   $F^*(Y)=X$ and $Y/X$ a $p$-group. If $Q \in \Syl_p(Y)$ and $\exp(Q/Q')=p^e$,
   then every character in $\Irr_{p'}(Y)$ is $\si_e$-invariant
   (or equivalently, it has $p$-rationality level at most $e$).
  \item Let $A$ be an almost simple group with socle $F^*(A)=S$ such that $A/S$
   is a $p$-group. Assume $\Irr_{p'}(B_0(S))$ contains $A$-invariant characters
   with $p$-rationality level~$\al$. Then $\Irr_{p'}(B_0(S))$ contains
   $A$-invariant characters with $p$-rationality level $\beta$ for every
   $2\leq \beta\leq \al$.
 \end{enumerate}
\end{conj}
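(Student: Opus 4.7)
My strategy is a case-by-case verification via the classification of finite simple groups, preceded by a common Clifford-theoretic reduction that strips off the outer $p$-group factor and reduces both parts to statements about the quasi-simple layer.

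For part~(a), since $[Y:X]$ is a $p$-power while $\chi(1)$ is coprime to $p$, standard Clifford theory forces any $\chi\in\Irr_{p'}(Y)$ to restrict irreducibly to $X$: one has $\chi_X=\theta$ for some $Y$-invariant $\theta\in\Irr_{p'}(X)$. Fix one extension $\hat\theta\in\Irr(Y)$; by Gallagher's theorem every $\chi\in\Irr(Y\mid\theta)$ has the form $\chi=\hat\theta\cdot\lambda$ with $\lambda\in\Irr(Y/X)$ linear. Since $Y=XQ$, one has $Y/X\cong Q/(Q\cap X)$, whose abelianisation is a quotient of $Q/Q'$; hence every such $\lambda$ has $p$-power order dividing $p^e$, so $c(\lambda)_p\leq p^e$. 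As $c(\chi)_p\leq\max(c(\hat\theta)_p,c(\lambda)_p)$, the conjecture reduces to exhibiting, for every $Y$-invariant $\theta\in\Irr_{p'}(X)$, a single extension $\hat\theta$ with $c(\hat\theta)_p\leq p^e$.

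This residual step I would attack family by family. For sporadic groups, inspection of the character tables together with the known automorphism actions settles the matter. For alternating groups, the classical combinatorial description of odd-degree characters via hooks, combined with explicit extensions to $\fS_n$, suffices. For simple groups of Lie type, I would combine the Navarro--Tiep analysis of the Galois action on Lusztig series~\cite{Nav-Tie19} with the classification of odd-degree characters and their stabilisers: the $p$-part of $Y/X$ is generated by field and diagonal automorphisms whose orders control exactly the $p$-part of the conductor of $\hat\theta$, matching the bound $p^e$. Part~(b) proceeds by the same reduction template; from an $A$-invariant character of level $\alpha$ in $\Irr_{p'}(B_0(S))$ I would construct $A$-invariant characters at each level $\beta\in[2,\alpha]$ either by varying the Jordan parameter within its $A$-orbit in the dual group or by tensoring with a suitable $A$-invariant linear character factoring through a central quotient, then verifying principal-block membership via Brauer's $p$-regular section criterion.

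The main obstacle I anticipate is the Lie type case in non-defining characteristic, especially for groups of small rank, where exceptional Schur multipliers, graph automorphisms (available only for $p=2,3$), and cuspidal characters with unusual Galois behaviour can disrupt the clean uniform parametrisation. Simultaneously maintaining $A$-invariance and principal-block membership in part~(b) compounds this difficulty. This is presumably why the present paper confines itself to $p=2$ in Section~\ref{sec:simple}, where the Sylow $2$-structure of quasi-simple groups and the classification of odd-degree characters are especially well-developed, making each of the above family-by-family verifications tractable.
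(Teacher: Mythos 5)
The statement is a conjecture, and the paper does not prove it in general: Proposition~\ref{prop:spor} disposes of part~(b) for sporadic, Tits, alternating and defining-characteristic Lie type groups for all $p$ (trivially, since those groups have $c(\chi)_p\leq p^2$ for every $\chi$), and Theorem~\ref{thm:p=2} then establishes the full conjecture only for $p=2$. Your response honestly offers a strategy rather than a proof. Your Clifford-theoretic reduction of part~(a) is sound: $p\nmid\chi(1)$ together with $|Y:X|$ a $p$-power forces $\chi_X$ irreducible, Gallagher writes $\chi=\hat\theta\lambda$, and linear characters of $Y/X$ have $p$-part of conductor at most $\exp(Q/Q')=p^e$, so it remains to find one extension $\hat\theta$ with $c(\hat\theta)_p\leq p^e$. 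But this leaves the entire difficulty intact (bounding the conductor of some extension of a quasi-simple character by the Sylow abelianisation exponent is essentially \cite[Conj.~5.4]{Nav-Tie19}), and it is not what the paper does: for $p=2$, part~(a) is simply cited to \cite{Mal19} via Remark~\ref{rem:Navarro-Tiep}.

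For part~(b) two of your concrete suggestions fail. Tensoring with an $A$-invariant linear character factoring through a central quotient is vacuous: $S$ is non-abelian simple, so $\Irr(S)$ has no non-trivial linear character at all. And varying the Jordan parameter within its $A$-orbit cannot change the $p$-rationality level, since $A$-conjugate characters have the same conductor; what is needed is to replace the semisimple parameter $t$ by its \emph{powers} $t^k$, which is exactly what Theorem~\ref{thm:p=2} does. Concretely, the paper first uses \cite[Prop.~3.9]{Nav-Tie19} and \cite{Mal19} to reduce to $G=E_6(\eps q)_\SC$ and $G=\SL_{2n}(\eps q)$; for those, $2$-central $2$-elements $t\in G^*$ have connected centralisers, the unique semisimple character $\chi_t\in\cE(G,t)$ lies in $B_0(G)$ by \cite{En00} (resp.\ \cite[Thm~21.14]{CE04}), its $2$-rationality level equals $\log_2|t|$ by \cite[Thm~5.6]{Hun24}, and $A$-stability of $\cE(G,t)$ propagates to all $\cE(G,t^k)$ by \cite[Prop.~7.2]{Tay18}, so $\chi_{t^k}$ is $A$-invariant and realises every intermediate level. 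Your instinct that $A$-invariance and principal-block membership are the delicate points is right, but the tools that handle them are Enguehard's block result and Taylor's equivariance theorem, not a Brauer $p$-regular section argument.
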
 

We recall that a group $S$ is said to be involved in a group $G$ if there exist
$N,K \leq G$ with $N\normal K$ and $K/N\cong S$.

\begin{rem}   \label{rem:Navarro-Tiep}
 Let $G$ be a finite group, $P\in\Syl_p(G)$ and assume $\exp(P/P')\geq p^2$.
 By work of Navarro and Tiep in \cite[Thms~B and~5.7]{Nav-Tie19}, if
 Conjecture~\ref{conj:simple}(a) holds for every finite simple group involved in
 $G$, then $\exp(P/P')$ is the largest $p$-rationality level in $\Irr_{p'}(G)$,
 and such largest $p$-rationality level is attained at $\Irr(B_0(G))$. Note
 that Conjecture~\ref{conj:simple}(a) easily implies
 \cite[Conj.~5.4]{Nav-Tie19}. In particular, this has been shown to hold for
 $p=2$ by the main result of \cite{Mal19}.
 \end{rem}

\begin{thm}   \label{thm:reduction}
 Let $G$ be a finite group, $p$ a prime. Assume that
 Conjecture~\ref{conj:simple} holds for every non-abelian simple group
 involved in $G$. If $\Irr_{p'}(B_0(G))$ contains a character of
 $p$-rationality level $\al\geq 2$, then $\Irr_{p'}(B_0(G))$ contains
 characters of $p$-rationality level $\be$ for every $2\leq \be\leq \al$.
\end{thm}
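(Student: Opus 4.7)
The plan is to induct on $|G|$. Let $\chi\in\Irr_{p'}(B_0(G))$ attain $p$-rationality level~$\al\ge 2$, fix $2\le\be<\al$, and aim to produce a character in $\Irr_{p'}(B_0(G))$ of level exactly~$\be$. Under the standing hypothesis that Conjecture~\ref{conj:simple}(a) holds for all non-abelian simple sections of $G$, Remark~\ref{rem:Navarro-Tiep} gives $\al\le\exp(P/P')$ for $P\in\Syl_p(G)$, so $P/P'$ admits linear characters of conductor~$p^\be$ for each $2\le\be\le\al$; these will ultimately be the ``source'' from which the desired $\chi_\be$ is built.

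The first step is a Clifford-style reduction. If $N\normal G$ is proper and some $\theta\in\Irr(N)$ beneath $\chi$ fails to be $G$-invariant, Fong--Reynolds provides $\psi\in\Irr(G_\theta\mid\theta)$ with $\psi^G=\chi$ lying in a block of $G_\theta$ inducing to $B_0(G)$, and Lemma~\ref{lem:Hung 2.4} yields $c(\psi)_p=c(\chi)_p=p^\al$. Since composition factors of $G_\theta$ are also composition factors of $G$, the inductive hypothesis applies to $G_\theta$ (in a version for the block covering $B_0(G)$) and yields a $\psi_\be$ of level~$\be$; induction to $G$, combined with Lemma~\ref{lem:constituents have level 2 or more} to prevent the conductor from collapsing, produces the required~$\chi_\be$. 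Iterating, one may assume that every $\theta\in\Irr(N)$ under $\chi$ is $G$-invariant for every minimal normal $N\normal G$.

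A structural analysis of such a minimal counterexample now restricts a minimal normal $N$ to one of three types: (i)~a $p'$-group, (ii)~a subgroup of $\oh p G$, or (iii)~a direct product $S^k$ with $S$ non-abelian simple of order divisible by~$p$. Cases (i)~and~(ii) are handled in the spirit of Lemma~\ref{lem:ConA-Nav}: one passes to $G/N$ in case~(i), while in case~(ii) Theorem~\ref{thm:Alperin-Dade} reduces to a group with normal Sylow $p$-subgroup, where $\chi_\be$ is built as the $p'$-degree constituent in $B_0(G)$ above a chosen linear character of $P$ of conductor~$p^\be$. In case~(iii), $G$-invariance of $\theta$ forces $\theta=\theta_0\times\cdots\times\theta_0$ for a single $A$-invariant $\theta_0\in\Irr_{p'}(B_0(S))$ of level~$\al$, where $A$ is the almost simple image in $\Aut(S)$ encoding the $G$-action on one factor; an additional reduction (passing to an appropriate Sylow-$p$ preimage of that image) arranges $A/S$ to be a $p$-group, and then Conjecture~\ref{conj:simple}(b) delivers an $A$-invariant $\theta_0'\in\Irr_{p'}(B_0(S))$ of level~$\be$.

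The main obstacle is the final lifting step: promoting $\theta_0'$ back to a character $\chi_\be\in\Irr_{p'}(B_0(G))$ with the prescribed conductor. I would form $\theta'\in\Irr(N)$ as the corresponding product, observe that it is $G$-invariant, and pick a constituent $\chi_\be\in\Irr(G\mid\theta')$ in $B_0(G)$ of $p'$-degree using block covering together with Brauer's third main theorem. Lemma~\ref{lem:Hung 2.4} then forces $c(\chi_\be)_p=c(\theta')_p=c(\theta_0')_p=p^\be$, precisely because $\be\ge 2$. The delicate point throughout is maintaining simultaneous control of three invariants---principal-block membership, $p'$-degree, and the exact $p$-part of the conductor---as one climbs the tower $S\le N\le G$; the technology gathered in Section~\ref{sec:2}, particularly Lemmas~\ref{lem:p' index},~\ref{lem:constituents have level 2 or more}, and~\ref{lem:Hung 2.4}, is tailored to make exactly this bookkeeping go through.
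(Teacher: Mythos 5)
Your high-level plan — induction on $|G|$, analyse a minimal normal subgroup~$N$, reduce to $N$ semisimple, and feed the simple factor into Conjecture~\ref{conj:simple}(b) — matches the paper's. But the central reduction step is missing, and the substitute you propose doesn't work.

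The paper's key structural move is its Step~2: reduce to the situation where $G/N$ is a $p$-group. This is done with Theorem~\ref{thm:Alperin-Dade} applied to $PN\normal H=PN\cent G P$ together with Lemma~\ref{lem:p' index} (to push $p'$-characters of $B_0(PN)$ up to $B_0(G)$), and Lemma~\ref{lem:Hung 3.3} to split into the cases $c(\rho)_p=p^\beta$ or $c(\hat\theta)_p=p^\beta$. Once $G/N$ is a $p$-group, three things fall out for free: (1) $\chi\in\Irr_{p'}(B_0(G))$ automatically has $\chi_N=\theta$ irreducible and $G$-invariant, so there is no non-invariant Clifford case to worry about; (2) $B_0(G)$ is the unique block covering $B_0(N)$, so block membership of constructed characters comes from \cite[Cor.~9.6]{Nav98} rather than anything like Brauer's third main theorem; (3) $\norm G{S_1}/S_1\cent G{S_1}$ is automatically a $p$-group, which is precisely what is needed to invoke Conjecture~\ref{conj:simple}(b). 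Your write-up never performs this reduction, and several of your subsequent fixes (the ``additional reduction'' to make $A/S$ a $p$-group, the appeal to Brauer's third main theorem for block containment) are attempts to patch exactly the holes that Step~2 is designed to close.

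Your proposed substitute — a Fong--Reynolds reduction to $G_\theta$ when $\theta$ is not $G$-invariant — does not go through. The statement of the theorem is specifically about the \emph{principal} block, and the Fong--Reynolds correspondent of $B_0(G)$ over a non-invariant $\theta\in\Irr(N)$ is a block of $G_\theta$ that is in general \emph{not} $B_0(G_\theta)$. You acknowledge this in passing (``in a version for the block covering $B_0(G)$''), but no such version is available: the inductive hypothesis is only for principal blocks, and proving the analogous statement for arbitrary blocks in Brauer correspondence with $B_0(G)$ is a substantially stronger claim (indeed the paper's Conjecture~\ref{conj:Hung} is a principal-block conjecture precisely to avoid this). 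Separately, your handling of case~(ii), ``$N\le\oh pG$'', via Theorem~\ref{thm:Alperin-Dade} is a misapplication: Alperin--Dade requires $|G:N|$ to be coprime to~$p$, which fails when $N$ is a $p$-group. The paper instead handles that case by passing to $G/N$ and tracking $\exp(P/P')$ versus $\exp((P/N)/(P/N)')$.

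One smaller point: after the reduction, the $G$-invariance of $\theta$ gives $\theta=\theta_1^{x_1}\times\cdots\times\theta_t^{x_t}$ with all $\theta_i$ \emph{$\Aut(S_1)$-conjugate} (and in particular with the same conductor), not literally $\theta_0\times\cdots\times\theta_0$. This is harmless for the construction of $\theta_\beta$ but should be stated accurately.
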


\begin{proof}
We argue by induction on $|G|$. We may assume $\alpha$ is the largest
$p$-rationality level among the $p'$-degree characters in the principal block.
The rest of the proof is divided into steps.
\medskip

\noindent
\textit{Step 1: We may assume that any minimal normal subgroup of $G$ is
semisimple of order divisible by $p$.}

We may assume that $\oh{p'}G=1$ by induction since
$\Irr(B_0(G/\oh{p'}{G}))\sbs\Irr(B_0(G))$. Let $P\in\Syl_p(G)$ and let $N$ be a
minimal normal subgroup of $G$. Assume that $N$ is a $p$-group, so $N\leq P$.
By Remark~\ref{rem:Navarro-Tiep} and since we are assuming
Conjecture~\ref{conj:simple} we have $\exp(P/P')=p^\al$.
Since $\exp(N)=p$ we have
$$\exp(P/P')=\exp((P/N)/(P/N)')\quad\text{or}\quad
  \exp(P/P')=p\cdot\exp((P/N)/(P/N)').$$
In either case, we are done because $\Irr_{p'}(B_0(G/N))$ contains characters
of every $p$-rationality level $2\leq \beta\leq\al-1$ by induction. The claim
follows. 
\medskip

\noindent
\textit{Step 2: Let $N$ be a minimal normal subgroup of $G$. We may assume
 $G/N$ is a $p$-group. In particular, $B_0(G)$ is the unique block covering
 $B_0(N)$.}

Since we are assuming Conjecture \ref{conj:simple}(a), the largest $p$-parts of
the character conductors in $\Irr_{p'}(B_0(G))$ and $\Irr_{p'}(B_0(PN))$
coincide (see Remark~\ref{rem:Navarro-Tiep}). Therefore it suffices to prove
that if $\Irr_{p'}(B_0(PN))$ contains a character of $p$-rationality level
$\beta\geq 2$ then so does $\Irr_{p'}(B_0(G))$. 

For this, let $\psi\in\Irr_{p'}(B_0(PN))$ with $c(\psi)_p=p^\beta$ and write
$H=PN\cent G P$.  By Theorem~\ref{thm:Alperin-Dade}, $\psi$ extends to a
character $\hat\psi\in\Irr_{p'}(B_0(H))$ with $c(\hat\psi)_p=p^\beta$. 
By Lemma~\ref{lem:p' index} there is some $\chi\in\Irr_{p'}(B_0(G))$ lying over
$\hat \psi$ with $c(\chi)_p\leq p^\beta$. 
Notice that $\chi$ lies over $\theta:=\psi_N\in\Irr_{p'}(B_0(N))$. Also notice
that $\psi=\rho \hat \theta$ for some $\rho\in\Irr_{p'}(PN/N)$ by Gallagher's
theorem, where $\hat\theta\in\Irr(PN)$ is the canonical extension of $\theta$
given by \cite[Cor.~6.2 and~6.4]{Nav18}, with $c(\theta)=c(\hat \theta)$.
By Lemma~\ref{lem:Hung 3.3}, we have $c(\rho)_p\leq c(\psi)_p=p^\beta$ and
either (1)~$c(\hat \theta)_p=c(\psi)_p$ or (2)~$c(\rho)_p=c(\psi)_p$. 

In case (1) we have that $c(\theta)_p=p^\beta\geq p^2$. By
Lemma~\ref{lem:constituents have level 2 or more} this yields that
$c(\chi)_p\geq p^2$ and by using Lemma~\ref{lem:Hung 2.4} we conclude
$c(\chi)_p=p^\beta$, as desired.

In case (2), we have that $c(\rho)_p=p^\beta$, so $\exp(PN/(PN)')\geq p^\beta$
because $PN/\ker\rho$ is cyclic and $(PN)'=P'N\sbs \ker \rho$ (recall that $N$
is perfect by Step~1).
If all characters in $\Irr_{p'}(B_0(G/N))$ have $p$-rationality level strictly
smaller than $\beta$ then Remark~\ref{rem:Navarro-Tiep} implies
$\exp(PN/(PN)') < p^\beta$, which is a contradiction. Hence some character in
$\Irr_{p'}(B_0(G/N))$ has $p$-rationality level at least $\beta$
and the inductive hypothesis guarantees the existence of a character in
$\Irr_{p'}(B_0(G/N))\sbs\Irr_{p'}(B_0(G))$ of $p$-rationality level
exactly~$\beta$.

The last claim follows from \cite[Cor.~9.6]{Nav98}.
\medskip

\noindent
\textit{Final step:}
Let $\chi\in\Irr_{p'}(B_0(G))$ with $c(\chi)_p=p^\al\geq p^2$. Let $N$ be a
minimal normal subgroup of $G$. By Step 1, we can write
$N=S_1\times\dots \times S_t$ where the $S_i$ are isomorphic non-abelian
simple groups of order divisible by $p$ permuted transitively by~$G$. By
Step~2, $G/N$ is a $p$-group. In particular, 
$\chi_N=\theta\in\Irr_{p'}(B_0(N))$ is irreducible with
$c(\theta)_p=p^\gamma\leq p^\al$.

Arguing as in the second paragraph of Step 2, we can write
$\chi=\hat\theta\rho$ where $\hat\theta$ is the canonical extension of $\theta$
to $G$ and $\rho\in\Irr(G/N)$ with $p^\al=c(\rho)_p$ or $p^\al=c(\theta)_p$.
If $p^\al=c(\rho)_p$ then, using that the result is true for $p$-groups
we are done. 
Otherwise, $p^\al=c(\theta)_p$. Let $\{x_1,\ldots,x_t\}$ be a transversal of
$\norm G {S_1}$ in $G$, chosen so that $S_i=(S_1)^{x_i}$. Then we can write
$\theta=\theta_1^{x_1}\times\dots\times\theta_t^{x_t}$, where
$\theta_i\in\Irr_{p'}(B_0(S_1))$.  Since $\theta$ is $G$-invariant, $\theta_i$
and $\theta_1$ are $\Aut(S_1)$-conjugate, for every $i\in\{ 1,\ldots,t\}$.
In particular, $c(\theta_i)_p=c(\theta_1)_p$ for every $i\in\{ 1,\ldots,t\}$
and actually $c(\theta_1)=c(\theta)_p=p^\alpha$. 

As $\theta$ is $G$-invariant, $\theta_1$ is $\norm G {S_1}$-invariant. Since we
may
view $\norm G{S_1}/\cent G{S_1}\leq\Aut(S_1)$ and $\norm G{S_1}/S_1\cent G{S_1}$
is a $p$-group, it follows from Conjecture~\ref{conj:simple}(b) that $S_1$
has $\norm G {S_1}$-invariant characters $\eta_{\beta}\in\Irr_{p'}(B_0(S_1))$
with $c(\eta_\beta)_p=p^\beta$ for every $2\leq\beta\leq\al$.
It is straightforward to check that
$$\theta_\beta:=\eta_\beta^{x_1}\times\dots\times\eta_{\beta}^{x_t}$$
is $G$-invariant and has degree coprime to $p$. Further, $\theta_\beta$ lies in
$B_0(N)$ by \cite[Lemma~2.1(b)]{Riz-Sch-Val20}. 
Notice also that $c(\theta_\beta)_p=c(\eta_\beta)_p=p^\beta$. By
\cite[Cor.~6.2 and 6.4]{Nav18} there is a canonical extension
$\hat\theta_\beta\in\Irr(G)$ of $\theta_\beta$ with
$c(\hat\theta_\beta)=c(\theta_\beta)$, so $c(\hat\theta_\beta)_p=p^\beta$.
Since $G/N$ is a $p$-group, using \cite[Cor.~9.6]{Nav98} we conclude that 
$\hat\theta_\beta$ lies in the principal $p$-block, as desired.
\end{proof}

\section{On almost simple groups and Theorem A}\label{sec:simple}

The purpose of this Section is to show that Conjecture \ref{conj:simple}(b)
holds for every finite simple group when $p=2$. Our first observation is for
arbitrary primes~$p$:

\begin{prop}   \label{prop:spor}
 Conjecture \ref{conj:simple}(b) holds for any prime $p$ whenever $S$ is a
 sporadic simple group, the Tits group, an alternating group $\fA_n$ with
 $n\ge5$, or a simple group of Lie type in characteristic~$p$.
\end{prop}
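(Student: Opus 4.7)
The plan is to treat the four families in the statement one at a time, in each case either reducing to a vacuous instance of Conjecture~\ref{conj:simple}(b) or verifying continuity directly. The common theme is to bound the $p$-rationality levels of the relevant irreducible characters tightly enough that the continuity condition becomes automatic.

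For $S=\fA_n$ with $n\geq 5$, I would argue as follows. Since $\mathrm{Out}(S)$ is a $2$-group, $A=S$ whenever $p$ is odd. Every irreducible character of $\fA_n$ takes values either in $\QQ$ (if it extends from $\fS_n$) or in a quadratic field $\QQ(\sqrt{\pm d})$ (if it arises as half of a split pair from a self-conjugate partition $\la$), where $d$ is the product of the main-diagonal hook lengths of $\la$; since those hook lengths are all odd, so is $d$. Hence $c(\chi)_p\leq p$ for odd $p$ and $c(\chi)_p\leq 4$ for $p=2$, so $\chi$ has $p$-rationality level at most $1$ for odd $p$ (the hypothesis of Conjecture~\ref{conj:simple}(b) is then empty) and at most $2$ for $p=2$ (continuity is then trivial, as level $1$ is excluded by the definition of conductor).

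For $S$ a sporadic simple group or the Tits group, I would use $|\mathrm{Out}(S)|\leq 2$ to reduce to the admissible choices $A=S$ (any $p$) and $A=S.2$ (only for $p=2$). For each such triple $(S,A,p)$, the $A$-invariant characters in $\Irr_{p'}(B_0(S))$ and their $p$-rationality levels can be extracted from the GAP character table library, and the continuity condition verified directly; this reduces to a finite computation.

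Finally, for $S=G(q)$ a simple group of Lie type in defining characteristic $p$, my plan is to use the character formula from Deligne--Lusztig theory: values of an irreducible character on a semisimple class (whose order is coprime to $p$) are sums of $p'$-roots of unity, while values on elements with non-trivial unipotent part are controlled by Green functions and cuspidal contributions coming from Gauss sums over $\mathbb{F}_p$. This should bound the $p$-ramification of the character field, giving $c(\chi)_p\leq p$ for odd $p$ (vacuous case) and a correspondingly small bound for $p=2$, making the continuity trivial as in the alternating case. The main obstacle is precisely this last step: for the twisted groups (Suzuki, Ree) and small-rank exceptional groups at bad primes, one must analyze the Green-function and Gauss-sum contributions at unipotent elements of $p$-power order carefully to rule out $p$-rationality levels exceeding $2$.
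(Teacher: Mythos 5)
Your overall strategy is the right one: Conjecture~\ref{conj:simple}(b) is vacuously satisfied once one knows that every $\chi\in\Irr(S)$ has $c(\chi)_p\le p^2$, since the hypothesis then only occurs with $\al\le 2$ and the conclusion reduces to $\be=\al=2$. The paper's own proof is just this observation together with a citation of \cite[Prop.~5.1]{Hun24}, where the uniform bound $c(\chi)_p\le p^2$ is established for all three families at once. Your re-derivations for $\fA_n$ (via the quadratic irrationalities attached to split pairs, whose discriminants are odd, hence have squarefree $p$-part) and for the sporadic groups and the Tits group (a finite GAP check) are both sound, and in the alternating case actually give the sharper bound $c(\chi)_p\le p$ for odd $p$.

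The Lie-type case, however, is a genuine gap, and you say so yourself (``the main obstacle is precisely this last step''). Your plan via Deligne--Lusztig character values, Green functions and Gauss sums is not carried out, and the bound you hope for, $c(\chi)_p\le p$ for odd $p$, is in fact false in general: already $\Sp_4(3)$ has regular unipotent elements of order $9$, and some irreducible characters take values that generate a field ramified at $3$ with conductor divisible by $9$. What is true, and what Hung proves, is only the weaker bound $c(\chi)_p\le p^2$, which is exactly enough. So to close the gap you should not try to push the bound down to level $1$; rather, replace the Gauss-sum heuristic with the argument of \cite[Prop.~5.1]{Hun24} (or a direct verification that character values of a finite group of Lie type in defining characteristic $p$ lie in a cyclotomic field whose $p$-part has degree at most $p^2$), and then conclude as you already do in the alternating case.
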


\begin{proof} 
As argued in the proof of \cite[Prop.~5.1]{Hun24}, alternating groups, sporadic
simple groups and groups of Lie type in characteristic~$p$ have
$c(\chi)_p\leq p^2$ for all of their irreducible characters~$\chi$.
\end{proof}

To deal with the groups of Lie type in non-defining characteristic we use the
following setup. Let $\bG$ be a simple algebraic group of simply connected type
over an algebraically closed field of characteristic~$r$ and $F:\bG\to\bG$ a
Steinberg endomorphism, with finite group of fixed points $G=\bG^F$. By
Proposition~\ref{prop:spor}, to show Conjecture~\ref{conj:simple}(b) it remains
to consider the non-abelian simple quotients~$S$
of groups $G$ as just described, for primes $p\ne r$. Assume $S=G/\bZ(G)$. Now
the irreducible characters $\chi\in\Irr_{p'}(B_0(S))$ are the deflations of
characters $\tilde\chi\in\Irr_{p'}(B_0(G))$ containing $\bZ(G)$ in their kernel,
and all automorphisms of $S$ are induced by automorphisms of $G$, so to prove
the validity of Conjecture~\ref{conj:simple}(b) we can argue using characters
of~$G$. We refer to \cite{GM20} for notation and background on characters of
finite reductive groups.

The following observation might be of independent interest:

\begin{prop}   \label{prop:oddrat}
 Let $G$ be as above. Then any unipotent character of $G$ of odd degree is
 rational valued.
\end{prop}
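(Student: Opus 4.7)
The plan is to prove the contrapositive: every non-rational-valued unipotent character of $G$ has even degree. First I would dispose of the classical types. If $\bG$ has type $A_n$, ${}^2\!A_n$, $B_n$, $C_n$, $D_n$, or ${}^2\!D_n$, then all unipotent characters of $G$ are rational valued; they are parametrized by partitions or by symbols and their values arise from characters of Hecke algebras of symmetric/hyperoctahedral type, which are integer valued. So the statement is vacuous in the classical case and we may assume $\bG$ is of exceptional type.

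For the exceptional types, the non-rational unipotent characters form a short and explicit list coming from Lusztig's classification of families: they correspond exactly to the non-real irreducible characters of the small finite groups $\Gamma_{\mathcal{F}}$ attached to Lusztig families $\mathcal{F}$, and so occur only for a handful of families in $G_2$, $F_4$, $E_7$, $E_8$ and in the Suzuki/Ree groups ${}^2\!B_2$, ${}^2\!G_2$, ${}^2\!F_4$. For each entry on this list the degree polynomial is known (see the tables in \cite{GM20} and the references therein), and I would check type by type that the degree is even for every admissible prime power $q$. As a sample case, the pair $G_2[\theta], G_2[\theta^2]$ has degree $\tfrac{1}{3}\,q\,\Phi_1(q)^2\Phi_2(q)^2$, which is even for every prime power $q$: if $q$ is even then the factor $q$ suffices, while if $q$ is odd then $\Phi_1\Phi_2=q^2-1$ is even, so $\Phi_1^2\Phi_2^2$ is divisible by $4$.

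The same parity argument applies in the remaining exceptional cases. Schematically, each non-rational unipotent character of $G$ has a degree of the shape $\tfrac{1}{d}\,q^a\,\prod_i\Phi_{n_i}(q)^{b_i}$ in which $a\ge 1$ (forcing parity when $q$ is even) and enough factors $\Phi_1,\Phi_2$ appear to force parity after division by $d$ when $q$ is odd. The main obstacle is the treatment of the Suzuki and Ree groups ${}^2\!B_2(q^2)$, ${}^2\!G_2(q^2)$, ${}^2\!F_4(q^2)$, for which the degree polynomials are not polynomial in $q$ but involve $\sqrt{2q}$ or $\sqrt{3q}$; there one has to work directly with the explicit integer-valued degrees in Lusztig's tables and verify parity by inspection of the handful of non-rational entries in each family.
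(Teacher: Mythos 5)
Your overall structure matches the paper's (dispose of classical types, then inspect the irrational unipotent characters in exceptional types), but two of the specific claims you lean on are not correct as stated, and the paper sidesteps both with a single cleaner tool, namely \cite[Prop.~4.5.5]{GM20}: the character field of a unipotent character is generated by its Frobenius eigenvalue, except for the principal series characters in the exceptional families of $E_7$ and $E_8$.

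First, for classical types your justification that the values ``arise from characters of Hecke algebras of symmetric/hyperoctahedral type'' only accounts for principal-series (more generally Harish--Chandra series) constituents; it says nothing about the cuspidal unipotent characters that occur in types $B_n$, $C_n$, $D_n$, ${}^2D_n$ and ${}^2A_n$ for appropriate ranks. The conclusion (all unipotent characters of classical groups are rational) is true, but the reason is that all Frobenius eigenvalues there are $\pm1$, which the paper uses directly. Second, the criterion you use to list the irrational characters in exceptional types --- that they ``correspond exactly to the non-real irreducible characters of $\Gamma_{\mathcal F}$'' --- is not right. For $G_2$ the relevant family has $\Gamma_{\mathcal F}=\mathfrak S_3$, whose irreducible characters are all real, yet $G_2[\theta]$, $G_2[\theta^2]$ are irrational; the irrationality is seen in the Frobenius eigenvalue $\sigma(x)/\sigma(1)$ attached to the pair $(x,\sigma)$, not in the reality of characters of $\Gamma_{\mathcal F}$. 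This criterion also completely misses the principal series characters of $E_7$ and $E_8$ in the exceptional families, whose irrationality involves $\pm\sqrt q$ rather than $\Gamma_{\mathcal F}$ at all; these would need to be added to your list and their degrees checked. Your sample parity check for $G_2[\theta]$ is fine, and the verification-by-inspection step is exactly the paper's ``quick inspection'', but as written the argument would not produce the correct list of characters to inspect.
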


\begin{proof}
By \cite[Prop.~4.5.5]{GM20} the character fields of unipotent characters are
generated by their Frobenius eigenvalues, except for the principal series
characters of $E_7$ and $E_8$ lying in exceptional families. Now for classical
types all Frobenius eigenvalues are $\pm1$, and for the exceptional types a
quick inspection shows that all irrational unipotent characters have even
degree.
\end{proof}

\begin{thm}   \label{thm:p=2}
 Conjecture \ref{conj:simple} holds for $p=2$.
\end{thm}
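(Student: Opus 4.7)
The plan is to address both parts of Conjecture~\ref{conj:simple} for $p=2$ separately. Part~(a) is essentially the specialization to $p=2$ of \cite[Conj.~5.4]{Nav-Tie19}, which is the main result of \cite{Mal19}; as already recorded in Remark~\ref{rem:Navarro-Tiep}, this yields exactly what is required, so no further work is needed there.

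For part~(b), Proposition~\ref{prop:spor} reduces us to finite non-abelian simple groups $S$ of Lie type in odd characteristic~$r$. I would write $S=G/\bZ(G)$ with $G=\bG^F$ for $\bG$ simple simply connected, and apply Lusztig's Jordan decomposition. The odd-degree characters in $\Irr(B_0(G))$ trivial on $\bZ(G)$ correspond to pairs $(s,\psi)$, where $s\in G^*=(\bG^*)^{F^*}$ is a $2$-element whose centralizer $\cent{G^*}{s}$ contains a Sylow $2$-subgroup of~$G^*$, and $\psi$ is an odd-degree unipotent character of $\cent{G^*}{s}$. By Proposition~\ref{prop:oddrat} every such~$\psi$ is rational-valued; combined with the standard Galois-action formulas for Deligne--Lusztig characters, this yields $c(\chi_{(s,\psi)})_2=|s|$.

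Given an $A$-invariant character $\chi_\alpha=\chi_{(s_\alpha,\psi_\alpha)}\in\Irr_{2'}(B_0(S))$ of $2$-rationality level~$\alpha$, the key construction is, for each $2\leq\beta\leq\alpha$, to put $s_\beta:=s_\alpha^{2^{\alpha-\beta}}$ and form $\chi_\beta:=\chi_{(s_\beta,1)}$. Then $|s_\beta|=2^\beta$; the centralizer $\cent{G^*}{s_\beta}$ contains $\cent{G^*}{s_\alpha}$, hence still a Sylow $2$-subgroup, so $\chi_\beta$ has odd degree and lies in $B_0(G)$; the $G^*$-class of $s_\beta$ is $A^*$-invariant since $A^*$-invariance of a semisimple class passes to powers, whence $\chi_\beta$ is $A$-invariant by the equivariance of Jordan decomposition; and $c(\chi_\beta)_2=2^\beta$ by the previous paragraph.

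The main obstacle is verifying uniformly across all Lie types that $c(\chi_{(s,\psi)})_2=|s|$ exactly (rather than some smaller power), which requires a careful treatment of the Galois action on Deligne--Lusztig characters, especially in types $D_n$, ${}^2\!D_n$, $E_6$ and $E_7$ where the interplay with diagonal and graph automorphisms is delicate. The $A^*$-equivariance of Jordan decomposition is by now available from the work of Cabanes--Sp\"ath (and Digne--Michel for the rationality issues). A final minor check is that $\chi_\beta$ descends from~$G$ to~$S$, i.e.\ $\bZ(G)\leq\ker\chi_\beta$; this follows from the corresponding property for $\chi_\alpha$ because $s_\beta$ is a power of~$s_\alpha$ and the image of a $2$-element in $\bZ(G^*)/\bZ(G^*)^\circ$ is determined by that of any of its odd powers.
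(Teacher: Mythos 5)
Your treatment of part~(a) matches the paper: both simply invoke \cite{Mal19} through Remark~\ref{rem:Navarro-Tiep}. For part~(b) your skeleton (Jordan decomposition, semisimple characters, pass from $s_\alpha$ to powers $s_\beta$) is the same one the paper uses, but your execution misses the one observation that makes the argument feasible, and this leaves genuine gaps rather than mere loose ends.

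The paper's crucial first step for Lie type in odd characteristic is that, by \cite[Prop.~3.9]{Nav-Tie19}, the Sylow $2$-subgroup $P$ of $G=\bG^F$ has $\exp(P/P')=2$ \emph{except} when $G$ is $E_6(\eps q)_\SC$ or $\SL_{2n}(\eps q)$ with $q\equiv\eps\pmod 4$ and $n$ not a power of $2$. Combined with \cite[Thm~1]{Mal19}, this means every $\chi\in\Irr(B_0(S))$ already has $2$-rationality level at most~$2$ outside those two families, so Conjecture~\ref{conj:simple}(b) is vacuous there. Your proposal instead attempts a uniform argument across all Lie types. This is where the pieces you flag as ``obstacles'' actually become fatal: (i) you rely on $\chi_\beta:=\chi_{(s_\beta,1)}$ being a well-defined single character, which requires $\cent{\bG^*}{s_\beta}$ to be connected --- false in general for simply connected $\bG$ of odd rank with $s_\beta$ a $2$-element; (ii) you invoke a general formula $c(\chi_{(s,\psi)})_2=|s|$ but the only available reference in this direction, \cite[Thm~5.6]{Hun24}, again requires connectedness of the centralizer; and (iii) you assert that odd degree (equivalently, $2$-centrality of $s_\beta$) forces $\chi_\beta\in\Irr(B_0(G))$ --- Brou\'e--Michel only gives $\Irr(B_0(G))\subseteq\bigsqcup_t\cE(G,t)$, not the converse, and the paper has to cite \cite[Thm~B]{En00} for $E_6$ and \cite[Thm~21.14]{CE04} for $\SL_{2n}$ to actually place the semisimple characters in the principal block.

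Once the problem is restricted to the two remaining families, all three points fall into place: the $2$-central $2$-elements there have connected centralisers (by inspection of \cite[Tab.~1]{Mal19} and \cite[Tab.~4.5.1]{GLS}), so there is a unique semisimple character $\chi_t\in\cE(G,t)$, the rationality formula of \cite[Thm~5.6]{Hun24} applies, the block membership is given by the cited theorems, and the $A$-stability of $\cE(G,t^k)$ follows from \cite[Prop.~7.2]{Tay18}. So the fix for your proposal is not to work harder on the ``main obstacle'' you identify, but to first run the reduction via \cite[Prop.~3.9]{Nav-Tie19} and \cite[Thm~1]{Mal19} so that you never have to prove anything uniform.

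One small technical remark: your last paragraph about descent to $S$ argues via ``odd powers,'' but $s_\beta=s_\alpha^{2^{\alpha-\beta}}$ is a $2$-power power; that particular justification does not apply. (In the paper this issue does not arise explicitly because, since $\bG$ is simply connected, the parametrisation of characters trivial on $\bZ(G)$ and the inflation to $S$ is handled once and for all in the paragraph before Proposition~\ref{prop:oddrat}.)
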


\begin{proof}
As pointed out in Remark~\ref{rem:Navarro-Tiep}, Part~(a) has been shown in
\cite{Mal19}. For Part~(b), by Proposition~\ref{prop:spor} we only need to
consider $S$ a simple group of Lie type in odd characteristic~$r$. Let $\bG,F$
and $G=\bG^F$ be as above such that $S=G/\bZ(G)$, and let $P\in\Syl_2(G)$. By
\cite[Prop.~3.9]{Nav-Tie19}, $P/P'$ has exponent~2 and thus by
\cite[Thm~1]{Mal19} all $\chi\in\Irr(B_0(S))$ have 2-rationality level $\al\le2$
and there is nothing to prove, unless $G$ is either $E_6(\eps q)_\SC$ or
$\SL_{2n}(\eps q)$ with $q\equiv\eps\pmod4$ and $n$ not a 2-power. It remains to
discuss the latter two families of groups. 
\par
We first make some general observations. By Brou\'e--Michel
\cite[Thm~9.12]{CE04}, the principal $2$-block of $G$ lies in the union of
Lusztig series
$$\Irr(B_0(G))\subseteq\bigsqcup_t\cE(G,t)$$
where $t$ runs over $2$-elements in a group $G^*$ dual to $G$. Furthermore, by
Lusztig's character degree formula (Jordan decomposition,
\cite[Thm~2.6.22]{GM20}), for $\chi\in\cE(G,t)$
to have odd degree, $t$ has to be 2-central in $G^*$, that is to say, it must
lie in the centre of some Sylow $2$-subgroup of~$G^*$. By
Proposition~\ref{prop:oddrat} the claim is vacuous for unipotent characters.
Let $A$ be such that $G\unlhd A$.

Now first assume $G=E_6(\eps q)_\SC$ with $4|(q-\eps)$. By \cite[Tab.~1]{Mal19}
the centre $Z$ of a Sylow 2-subgroup of $G$ is a cyclic group of order
$(q-\eps)_2$ and is centralised by a (Levi) subgroup $D_5(\eps q)(q-\eps)$.
Now note that all non-trivial elements $1\ne t\in Z$ have connected centraliser
(of type $D_5$). Thus, by \cite[Thm~5.6]{Hun24} the $2$-rationality level
$\al_t$ of all $\chi\in\cE(G,t)$ is the same and given by $|t|=2^{\al_t}$.
Now, as $\mathbf{C}_{\bG^*}(t)$ is connected, for each $t$ there is a unique
semisimple character $\chi_t\in\cE(G,t)$ \cite[Def.~2.6.9, Cor.~2.6.18]{GM20}.
By \cite[Thm~B]{En00} all $\chi_t$ lie in the principal
2-block of~$G$. If $\cE(G,t)$ contains an $A$-invariant character, then in
particular $\cE(G,t)$ is $A$-stable, and hence $\chi_t$ is $A$-invariant.
But then also all $\cE(G,t^k)$ are $A$-stable for any $k\ge1$ by
\cite[Prop.~7.2]{Tay18}. Since $\chi_{t^k}$ is the unique semisimple character
in $\cE(G,t^k)$ this forces $\chi_{t^k}$ to be $A$-invariant. Thus any
$2\le\be\le\al_t$ occurs as $2$-rationality level of some
$\chi\in\Irr_{2'}(B_0(S))$, as desired.
\par
Finally, assume $G=\SL_{2n}(\eps q)$ with $4|(q-\eps)$ and $n$ not a 2-power.
In this case, by \cite[Tab.~4.5.1]{GLS}, all centralisers of 2-central
2-elements of $G^*\cong\PGL_{2n}(\eps q)$ are connected, and they are Levi
subgroups. Moreover, by \cite[Thm~21.14]{CE04} all Lusztig series $\cE(G,t)$,
for 2-elements $t\in G^*$ lie in the principal 2-block. But then we can argue
exactly as in the previous case.
\end{proof}

\section{On the $p$-rationality gap}\label{sec:normaldefect}

The purpose of this section is to show that Conjecture B of the introduction
holds for groups with a normal Sylow $p$-subgroup. In order to do so we need to
recall the concept of semi-inertia subgroup and some of its properties. 
 
Let $N \normal G$. The \emph{semi-inertia subgroup of $\theta\in\Irr(N)$} is
defined as
$$G_\theta^*:=\{g \in G\mid
   \theta^g =\theta^\tau\text{ for some }\tau\in \Gal(\QQ(\theta)/\QQ)\}.$$
Notice that $G_\theta\leq G_\theta^*$.

\begin{lem}   \label{lem:semi inertia}
 Let $N\normal G$ and $\theta\in\Irr(N)$. Let $G_{\theta}^*$ be the semi-inertia
 subgroup of $\theta$ in $G$.
 \begin{enumerate}[\rm(a)]
  \item If $\psi\in\Irr(G_{\theta}|\theta)$ then
   $\QQ(\psi^{G_{\theta}^*})=\QQ(\psi^G)$.
  \item The map $g\mapsto \tau$ where $\theta^g=\theta^\tau$ is a well-defined
   group homomorphism
   $$G_{\theta}^*\to\Gal(\QQ(\theta)/\QQ(\theta^G))$$
   with kernel $G_{\theta}$, so
   $$G_{\theta}^*/G_{\theta}\cong \Gal(\QQ(\theta)/\QQ(\theta^G)).$$
 \end{enumerate}
\end{lem}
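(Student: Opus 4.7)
\medskip

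\noindent\textbf{Proof plan.} I would prove (b) first and then use it in (a).

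For (b), well-definedness of the assignment $g\mapsto\tau$ is immediate: if $\theta^{\tau_1}=\theta^{\tau_2}$ then $\tau_1\tau_2^{-1}$ fixes every $\theta(n)$, and since these values generate $\QQ(\theta)$, it is trivial in $\Gal(\QQ(\theta)/\QQ)$. The homomorphism property is a direct consequence of the commutativity of $G$-conjugation and Galois action on $\Irr(N)$, namely $(\theta^{\tau})^g=(\theta^g)^{\tau}$ for $g\in G$ and $\tau\in\Gal(\QQ(\theta)/\QQ)$. The kernel is clearly $G_\theta$. To see that the image lies in $\Gal(\QQ(\theta)/\QQ(\theta^G))$ I would use that induction commutes with Galois action, so if $\theta^g=\theta^\tau$ then $(\theta^G)^\tau=(\theta^\tau)^G=(\theta^g)^G=\theta^G$, i.e.\ $\tau$ fixes the values of $\theta^G$. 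For surjectivity, given $\tau\in\Gal(\QQ(\theta)/\QQ(\theta^G))$, the same identity shows $(\theta^\tau)^G=\theta^G$; by Clifford's theorem this forces $\theta^\tau$ to be a $G$-conjugate of $\theta$, so $\theta^\tau=\theta^g$ for some $g\in G$, which lies in $G_\theta^*$ and maps to $\tau$.

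For (a), write $\Psi:=\psi^{G_\theta^*}$. Transitivity of induction yields $\psi^G=\Psi^G$, so the values of $\psi^G$ lie in $\QQ(\Psi)$ and thus $\QQ(\psi^G)\subseteq\QQ(\psi^{G_\theta^*})$. For the reverse inclusion I would show that any $\sigma\in\Gal(\QQ^\ab/\QQ)$ fixing $\psi^G$ also fixes $\Psi$. From $(\psi^\sigma)^G=(\psi^G)^\sigma=\psi^G$ and Clifford's theorem applied to $N\normal G$, the character $\psi^\sigma$ lies over some $G$-conjugate $\theta^g$ of~$\theta$, and this $g$ lies in $G_\theta^*$ by (b). Then $(\psi^\sigma)^{g^{-1}}=(\psi^{g^{-1}})^\sigma$ is a character of $G_\theta$ over $\theta$ (note $G_\theta$ is normal in $G_\theta^*$, being the kernel of the map in (b)), whose induction to $G$ is still $\psi^G$; by the Clifford correspondence $\Irr(G_\theta|\theta)\leftrightarrow\Irr(G|\theta)$ this character must equal $\psi$, so $\psi^\sigma=\psi^g$. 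Since $g\in G_\theta^*$, inducing from $G_\theta$ to $G_\theta^*$ is insensitive to this inner conjugation, so $\Psi^\sigma=(\psi^\sigma)^{G_\theta^*}=(\psi^g)^{G_\theta^*}=\Psi$, as required.

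The main obstacle is the step identifying $\psi^\sigma$ with $\psi^g$ in part~(a): one must carefully track how the Galois action interacts with the Clifford correspondence, in particular verifying that $(\psi^\sigma)^{g^{-1}}$ is genuinely a character in $\Irr(G_\theta|\theta)$ inducing to $\psi^G$, so that uniqueness in the Clifford correspondence can be invoked. The rest of the argument is essentially bookkeeping with the commuting actions of $G$ and $\Gal(\QQ^\ab/\QQ)$ on $\Irr(N)$.
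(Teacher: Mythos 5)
Your proof is correct. The paper itself does not supply a full argument here: it cites \cite[Lemmas 2.2 and 2.3]{Nav-Ten10} for both parts and only adds the short observation that $(\theta^G)^\tau=(\theta^\tau)^G=(\theta^g)^G=\theta^G$, which shows the image of the homomorphism lies in $\Gal(\QQ(\theta)/\QQ(\theta^G))$; you prove this same identity and use it in the same way. The rest of your argument --- well-definedness and the homomorphism property from the commuting actions of $G$ and the Galois group on $\Irr(N)$, kernel $G_\theta$, surjectivity by noting that $(\theta^\tau)^G=\theta^G$ forces $\theta^\tau$ to be a $G$-conjugate of $\theta$ via Clifford's theorem, and for part~(a) the key identification $\psi^\sigma=\psi^g$ via uniqueness in the Clifford correspondence $\Irr(G_\theta|\theta)\leftrightarrow\Irr(G|\theta)$ followed by $(\psi^g)^{G_\theta^*}=\psi^{G_\theta^*}$ since $g\in G_\theta^*$ --- is the standard Clifford-theoretic route and in substance reproduces the content of the cited lemmas. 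Two minor remarks: the fact that $g\in G_\theta^*$ in part~(a) is by the definition of $G_\theta^*$ rather than ``by~(b)'', and it is worth saying explicitly that $\psi^G$ lies over both $\theta$ and $\theta^\sigma$ before invoking Clifford's theorem to make them $G$-conjugate --- but neither affects correctness.
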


\begin{proof}
See \cite[Lemmas 2.2 and 2.3]{Nav-Ten10}. Note that if $g\mapsto \tau$, then
$(\theta^G)^\tau=(\theta^\tau)^G=(\theta^g)^G=\theta^G$, so $\tau$ fixes
$\theta^G$, and it follows that the map is well-defined.
\end{proof}

We fix a prime $p$ and set $\cG:=\Gal(\QQ_p/\QQ)=\{\tau_1,\ldots,\tau_{p-1}\}$
where $\tau_j$ takes a
fixed primitive $p$th root of unity to its $j$th power. If $P$ is an elementary
abelian $p$-group, then $\cG$ acts on $P$ by automorphisms as $\tau_j(y)=y^j$ for
every $y\in P$ and $\tau_j\in\cG$. 

\begin{lem}   \label{lem:permutationisomorphism}
 Let $P$ be an elementary abelian $p$-group. Suppose that $X$ acts coprimely
 by automorphisms on $P$. Let $\cG=\Gal(\QQ_p/\QQ)$ and write $A=X\times \cG$.
 Then $A$ acts by automorphisms and coprimely on $P$ (where the action of $\cG$
 is as in the previous paragraph) and the actions of $A$ on $P$ and $\Irr(P)$
 are permutation isomorphic. Set $G=P\rtimes X$. If $y\mapsto\la_y$ is an
 $A$-equivariant bijection $P\to\Irr(P)$, then 
 $$\norm G {\langle y \rangle }/\cent G {y}
   =G_{\la_y}^*/G_{\la_y}\qquad\text{for all $y\in P$} .$$
\end{lem}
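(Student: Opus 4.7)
The plan is to prove the three assertions of the lemma in order: that $A$ acts by coprime automorphisms on $P$, that the $A$-actions on $P$ and $\Irr(P)$ are permutation isomorphic, and, for any $A$-equivariant bijection $y\mapsto\la_y$, the stabilizer identity $\norm G{\langle y\rangle}/\cent G y=G_{\la_y}^*/G_{\la_y}$.

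The coprime action is immediate: view $P$ as an $\mathbb{F}_p$-vector space on which $X$ acts $\mathbb{F}_p$-linearly, and observe that the $\cG$-action $\tau_j\colon y\mapsto y^j$ is by scalars and hence commutes with $X$. This gives an action of $A=X\times\cG$ by automorphisms, with $|A|=|X|(p-1)$ coprime to $p$. For the permutation isomorphism I would apply Maschke under this coprime action: for each $H\leq A$, $P=P^H\oplus [H,P]$ as $\mathbb{F}_p H$-modules, and a linear character $\la\in\Irr(P)$ is $H$-invariant iff it is trivial on $[H,P]$, so $|\Irr(P)^H|=|P/[H,P]|=|P^H|$. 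Equality of fixed-point counts on all subgroups of $A$ implies $P\cong\Irr(P)$ as $A$-sets by the classical table-of-marks characterization, producing the desired $A$-equivariant bijection $y\mapsto\la_y$.

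For the stabilizer identity, the key is to match the $\cG$-actions on the two sides. Since $\la^{\tau_j}(g)=\tau_j(\la(g))=\la(g)^j=\la^j(g)$, the Galois action of $\cG$ on $\Irr(P)$ coincides with character-powering, and $A$-equivariance then gives $\la_{y^j}=\la_y^{\tau_j}$. For $p$ odd, $\cG$ acts fixed-point-freely on $P\setminus\{1\}$, so $\la_y$ is non-trivial for $y\ne 1$, whence $\QQ(\la_y)=\QQ_p$ and $\Gal(\QQ(\la_y)/\QQ)=\cG$; the cases $y=1$ and $p=2$ collapse trivially. Now writing $g=px\in G=P\rtimes X$ with $p\in P$, $x\in X$, and using that $P$ centralises $\la_y$ together with $X$-equivariance, one has $\la_y^g=\la_y^x=\la_{y^x}$. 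Hence $g\in G_{\la_y}$ iff $y^x=y$ iff $x\in\cent X y$, giving $G_{\la_y}=\cent G y$; and $g\in G_{\la_y}^*$ iff $y^x=y^j$ for some $j$ iff $x\in\norm X{\langle y\rangle}$, giving $G_{\la_y}^*=\norm G{\langle y\rangle}$. The stated equality of quotients is then literal.

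The only step of real content is the table-of-marks argument producing the bijection from the equal fixed-point counts; after that, the identification of the $\cG$-action on $\Irr(P)$ as simultaneously Galois twist and character-powering, combined with $X$-equivariance, reduces the stabilizer identities to routine bookkeeping inside $G=P\rtimes X$.
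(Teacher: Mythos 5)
Your proof is correct and follows essentially the same route as the paper's: the paper likewise notes that the $\cG$- and $X$-actions commute to get a coprime $A$-action, invokes a permutation-isomorphism result (there cited as \cite[Thm~18.10]{Hup98}, which you reprove directly via Maschke and the equal-fixed-point/table-of-marks criterion), and then reads off $G_{\la_y}=P\rtimes\cent X y=\cent G y$ and $G_{\la_y}^*=P\rtimes\norm X{\langle y\rangle}=\norm G{\langle y\rangle}$ from $A$-equivariance exactly as you do, after observing that $\QQ(\la_y)=\QQ_p$ for $y\neq1$ so the Galois group in the definition of the semi-inertia subgroup is all of $\cG$.
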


\begin{proof}
The group $A$ acts by automorphisms on $P$ because the actions of $X$ and $\cG$
commute. The actions of $A$ on $P$ and $\Irr(P)$ are permutation isomorphic by
\cite[Thm~18.10]{Hup98}.
Let $\al:P\to\Irr(P)$ be an $A$-equivariant bijection. Write $\al(y)=\la$. 
Notice that $G_\la=P\rtimes X_\la$. Now $x\in X_\la$ if, and only if,
$\al(y)^x=\al(y)$ if, and only if, $x\in\cent X y$. Also,
$G_\la^*=P\rtimes X_\la ^*$ and $x\in X_\la^*$ if, and only if, there is some
$\tau\in\cG$ for which $\al(y)^x=\al(y)^\tau$ if, and only if,
$y^x=y^\tau=y^j$ for some $j\in\{ 1,\ldots,p-1\}$ if, and only if,
$x\in\norm X {\langle y\rangle}$.
\end{proof}

We denote by $\Irr_{p',\si_1}(G)$ the subset of characters in $\Irr_{p'}(G)$
fixed by the Galois automorphism $\si_1$, that is, the set of almost
$p$-rational characters of degree coprime to $p$. We mention that there is no
standard notation yet for this set in the literature (see \cite{Riz-Sch-Val20,
Hun-Mal-Mar22}), we use the notation from \cite{Nav-Riz-Sch-Val21}.

\begin{lem}   \label{lem:frattini}
 Let $G$ be a finite group, $P\in\Syl_p(G)$ and assume $P\normal G$. Then
 $\Irr_{p',\si_1}(G)=\Irr(G/\Phi(P))$.
\end{lem}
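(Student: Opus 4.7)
The plan is to prove the equality $\Irr_{p',\sigma_1}(G) = \Irr(G/\Phi(P))$ via the two inclusions, with Clifford theory applied to the normal Sylow $p$-subgroup $P$ doing most of the work, combined with the relation between the $p$-part of the conductor and $\sigma_1$-invariance.

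For the inclusion $\Irr_{p',\sigma_1}(G) \subseteq \Irr(G/\Phi(P))$, I would start from $\chi \in \Irr_{p',\sigma_1}(G)$ and let $\lambda$ be an irreducible constituent of $\chi_P$. Since $\chi(1)$ is coprime to $p$ while $\lambda(1) \mid |P|$, Clifford forces $\lambda$ to be linear. Invoking Lemma~\ref{lem:Hung 2.4} (splitting on whether $c(\chi)_p \geq p$ or $c(\chi)_p = 1$, the latter case handled via the contrapositive of part (iii)) yields $c(\lambda)_p \leq p$; because $c(\lambda)$ is itself a $p$-power, this forces $o(\lambda) \mid p$, so $\ker\lambda \supseteq P^pP' = \Phi(P)$. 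Since $\Phi(P)$ is characteristic in $P \normal G$, every $G$-conjugate of $\lambda$ is trivial on $\Phi(P)$, and the Clifford decomposition $\chi_P = e\sum_i \lambda_i$ then gives $\chi(x) = \chi(1)$ for $x \in \Phi(P)$, proving $\Phi(P) \subseteq \ker\chi$.

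For the reverse inclusion, I would pass to $\bar G := G/\Phi(P) = \bar P \rtimes \bar H$, with $\bar P := P/\Phi(P)$ elementary abelian and $\bar H$ a $p$-complement furnished by Schur--Zassenhaus. Given $\chi \in \Irr(\bar G)$, choose $\bar\lambda \in \Irr(\bar P)$ under $\chi$; this is linear of order dividing $p$. On its inertia subgroup $T = \bar P \rtimes \bar H_{\bar\lambda}$, define $\hat{\bar\lambda}(uh) := \bar\lambda(u)$; this is a well-defined linear extension of $\bar\lambda$ (because $\bar H_{\bar\lambda}$ fixes $\bar\lambda$) with $\QQ(\hat{\bar\lambda}) = \QQ(\bar\lambda) \subseteq \QQ_p$. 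Gallagher gives $\chi = (\hat{\bar\lambda}\,\beta)^{\bar G}$ for some $\beta \in \Irr(\bar H_{\bar\lambda})$, whose conductor is coprime to $p$ since $\bar H_{\bar\lambda}$ is a $p'$-group. As the values of induced and product characters satisfy $\QQ(\chi) \subseteq \QQ(\hat{\bar\lambda}\,\beta) \subseteq \QQ(\hat{\bar\lambda})\QQ(\beta)$, I conclude $c(\chi)_p \leq p$, i.e.\ $\chi$ is $\sigma_1$-invariant. The Clifford formula $\chi(1) = [\bar G : T]\,\beta(1)$ expresses $\chi(1)$ as a product of two $p'$-integers, completing the verification that $\chi \in \Irr_{p',\sigma_1}(G)$.

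The main subtlety I foresee is the $p=2$ case: the paper's convention that $c(\chi)_2 \neq 2$ means the uniform bound $c(\chi)_p \leq p$ sharpens automatically (order-$2$ linear characters have $\QQ(\lambda) = \QQ$, hence conductor~$1$), and $\sigma_1$-invariance of odd-degree characters agrees with $2$-rationality by \cite[Thm~2.5]{Val23}. The only other ingredient requiring care is the canonical-extension identity $\QQ(\hat{\bar\lambda}) = \QQ(\bar\lambda)$; for our linear $\bar\lambda$ and the semidirect factorization $T = \bar P \rtimes \bar H_{\bar\lambda}$ it is immediate from the explicit prescription above.
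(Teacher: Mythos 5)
Your reverse inclusion is correct and self-contained; the paper simply cites \cite[Lemma~5.1]{Nav-Tie19} for that step, so your Gallagher-plus-field-of-values computation (explicit linear extension $\hat{\bar\lambda}$ with $\QQ(\hat{\bar\lambda})\subseteq\QQ_p$, then $\QQ(\chi)\subseteq\QQ(\hat{\bar\lambda})\QQ(\beta)$ with $\QQ(\beta)$ of $p'$-conductor) is a more explicit but perfectly valid alternative.

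The forward inclusion, however, has a gap exactly where you anticipated subtlety. You split on $c(\chi)_p\in\{1,p\}$ and, in the case $c(\chi)_p=1$, invoke ``the contrapositive of part (iii)'' of Lemma~\ref{lem:Hung 2.4}. But that lemma carries the standing hypothesis $c(\chi)_p\ge p$ for \emph{all} of its conclusions; when $c(\chi)_p=1$ the hypothesis fails and the contrapositive is vacuous, so it cannot exclude $c(\la)_p\ge p^2$. Re-routing through conductors is not actually needed here. The paper argues directly from $\si_1$-invariance: write $\chi_P=e(\la_1+\cdots+\la_t)$ for the $G$-orbit of a linear constituent $\la$. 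The $p$-group $\langle\si_1\rangle$ permutes $\{\la_1,\ldots,\la_t\}$ because $\chi^{\si_1}=\chi$, and $t=|G:G_\la|$ divides $|G:P|$, hence is prime to $p$; therefore $\si_1$ fixes some $\la_i$ and consequently all of them. Then $\la=\la^{\si_1}=\la^{1+p}$ forces $o(\la)\mid p$, i.e.\ $\Phi(P)\le\ker\la$. This treats $c(\chi)_p=1$ and $c(\chi)_p=p$ uniformly, makes no appeal to Lemma~\ref{lem:Hung 2.4}, and also disposes of $p=2$ without the detour through \cite[Thm~2.5]{Val23}. I would replace your conductor-splitting argument by this orbit argument.
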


\begin{proof}
Let $\chi\in\Irr_{p',\si_1}(G)$. Then by Clifford's theorem
$\chi_P=e(\la_1+\dots+\la_t)$ for some $e\ge1$, where $\{\la_1,\dots,\la_t\}$
is the set of $G$-conjugates of some linear $\la\in\Irr(P/P')$. Since
$\chi^{\si_1}=\chi$ the $p$-group $\langle \si_1\rangle$ acts on
$\{\la_1,\dots,\la_t\}$, but $t=|G:G_\la|$ divides $|G:P|$ and thus is prime
to~$p$. This shows $\si_1$ fixes some
$\la_i$ and therefore it fixes all of them. Thus $\la^{\si_1}=\la^{p+1}=\la$ so
$\la$ has order $p$ as an element of $\Irr(P/P')$. We conclude that
$\Phi(P)\sbs\ker\la$ so $\Irr_{p',\si_1}(G)\sbs\Irr(G/\Phi(P))$. For the
reverse inclusion, notice that $\la\in\Irr(P/\Phi(P))$ is $\si_1$-fixed, and so
is every $\chi\in\Irr(G|\la)$ by \cite[Lemma~5.1]{Nav-Tie19}. 
\end{proof}

We recall the statement of Conjecture~B below for the reader's convenience.

\begin{conj}   \label{conj:continuity gap} 
 Let $G$ be a finite group, $p$ be a prime and $P\in\Syl_p(G)$. Write
 $K=\norm G P/\Phi(P)$ and $Q=P/\Phi(P)$. Then the following are equivalent:
 \begin{enumerate}[\rm(i)]
  \item Every almost $p$-rational $\chi \in \Irr_{p'}(G)$ is $p$-rational. 
  \item The following hold for all $1\neq y\in Q$:
  \begin{enumerate}[\rm(a)]
   \item $\norm K {\langle y\rangle}/\cent K{y}\cong \Aut(\langle y\rangle)$,
    and
   \item $\norm K {\langle y \rangle}/Q$ acts trivially on the set of conjugacy
    classes of $\cent K{y}/Q$.
  \end{enumerate}
 \end{enumerate}
\end{conj}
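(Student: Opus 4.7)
The plan is to reformulate (i) as a statement about $K := G/\Phi(P)$, apply Schur--Zassenhaus, and then analyse the action of $\cG := \Gal(\QQ_p/\QQ)$ on $\Irr(K)$ via Clifford theory. A character in $\Irr_{p'}(G)$ of $p$-rationality level~$1$ is $\si_1$-invariant, so by the first half of the proof of Lemma~\ref{lem:frattini} it factors through $K$. Conversely, by Ito's theorem every $\chi\in\Irr(K)$ has $p'$-degree, and $\exp(K)_p=p$ forces $c(\chi)_p\le p$. Hence (i) is equivalent to ``every $\chi\in\Irr(K)$ is $p$-rational''. By Schur--Zassenhaus write $K=Q\rtimes H$ with $Q$ elementary abelian and $\gcd(|H|,p)=1$. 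Using Lemma~\ref{lem:permutationisomorphism} with $A=H\times\cG$, fix an $A$-equivariant bijection $y\mapsto\la_y$ of $Q$ with $\Irr(Q)$; every $\chi\in\Irr(K)$ then has the form $(\hat\la_y\be)^K$ with $y\in Q$ and $\be\in\Irr(\cent H y)$, where $\hat\la$ denotes the canonical extension $(uh')\mapsto\la(u)$ of $\la=\la_y$ to $K_\la=Q\rtimes\cent H y$.

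Since $\QQ(\chi)\cap\QQ_{p^\infty}=\QQ$ characterises $p$-rationality, and $\cG$ extends to $\Gal(\QQ^\ab/\QQ)$ by acting trivially on $p'$-roots, it suffices to ask when $\chi$ is $\cG$-fixed. A direct calculation gives $\tau_j(\hat\la)=\hat{\la^j}$ and $\tau_j(\be)=\be$ (the latter because $\be$ has $p'$-cyclotomic values), so $\chi^{\tau_j}=(\hat{\la^j}\be)^K$. A parallel canonical-extension computation shows $\hat\la^g=\hat{\la^j}$ for any $g\in\norm K{\langle y\rangle}$ acting as $\cdot j$ on $\langle y\rangle$, whence $(\hat\la\be)^g=\hat{\la^j}\be^g$. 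By the bijectivity of the Clifford correspondence $\Irr(K_\la\mid\la^j)\to\Irr(K\mid\la^K)$ one deduces
\[
  \chi^{\tau_j}=\chi \iff \exists\,g\in\norm K{\langle y\rangle}\text{ acting as }\cdot j \text{ on }\langle y\rangle\text{ with }\be^g=\be.
\]

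For $y=1$ the character $\chi$ inflates from the $p'$-group $H$ and is automatically $p$-rational. For $1\ne y\in Q$, demanding that every $(\hat\la_y\be)^K$ be $\cG$-fixed amounts exactly to (a) (existence of such a $g$ for every $j\in(\ZZ/p)^*$, equivalently $\norm K{\langle y\rangle}/\cent K y\cong\Aut(\langle y\rangle)$) together with (b) (every such $g$ fixes every $\be$; equivalently, by Brauer's permutation lemma, $\norm K{\langle y\rangle}/Q$ acts trivially on conjugacy classes of $\cent K y/Q$). This gives (ii)$\Rightarrow$(i). Conversely, if (a) fails for some $y$ take $\be=1$, so $(\hat\la_y)^K$ is a $p'$-degree character not fixed by some $\tau_j$; if (b) fails, the offending triple $(y,\be,g)$ yields such a character directly. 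The hard part is the bookkeeping around the canonical extension under both the Galois action and $K$-conjugation, and the use of Clifford uniqueness to descend the global identity $\chi^{\tau_j}=\chi$ to the local condition $\be^g=\be$.
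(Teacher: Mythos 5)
Your argument is, in substance, the paper's proof of Theorem~\ref{thm:normaldefect}: it establishes Conjecture~\ref{conj:continuity gap} under the extra hypothesis that $P\normal G$, which you use implicitly throughout (in defining $K=G/\Phi(P)$ rather than $\norm GP/\Phi(P)$, in invoking Schur--Zassenhaus to split $K=Q\rtimes H$, and in appealing to Lemma~\ref{lem:frattini}). You should state this assumption, since the general statement remains a conjecture.

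That said, the route you take is mildly but genuinely different in its bookkeeping, and is cleaner at one point. You avoid the semi-inertia machinery of Lemma~\ref{lem:semi inertia} entirely: rather than passing to the intermediate character $\psi^*=\psi^{K^*_\la}$ and using $\QQ(\psi^*)=\QQ(\chi)$, you work directly with $\chi=(\hat\la\be)^K$ and derive the single equivalence
$\chi^{\tau_j}=\chi$ iff there exists $g\in\norm K{\langle y\rangle}$ acting as $j$ on $\langle y\rangle$ with $\be^g=\be$,
from uniqueness of the Clifford correspondent over $\la^j$ together with $\hat\la^g=\hat{\la^j}$ (uniqueness of the order-$p$ extension). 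Both implications of the theorem then fall out of this one biconditional plus Brauer's permutation lemma. In particular your proof of (i)$\Rightarrow$(ii.a) is simpler than the paper's: you take $\be=1_{\cent Hy}$ and read off the failure directly, whereas the paper evaluates $\psi^*$ on $Q$ and invokes linear independence of the $\la^\tau$. One small point worth spelling out when you deduce (ii.b) from (i): your equivalence gives, for each $\be$ and each $j$, \emph{some} $g$ acting as $j$ fixing $\be$, not \emph{every} such $g$; the passage from ``some'' to ``all'' uses that two elements of $\norm K{\langle y\rangle}$ acting as the same $j$ differ by an element of $\cent Ky$, which acts by inner automorphisms on $\cent Ky/Q$ and hence trivially on its classes and characters.
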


Recall that if $p=2$, the conditions in Conjecture~\ref{conj:continuity gap} are
vacuous. If $p$ is odd, then a character $\chi$ is almost $p$-rational if, and
only if, $\chi$ is $\si_1$-fixed.

\begin{thm}   \label{thm:normaldefect}
 Let $G$ be a finite group, $p$ be a prime and $P\in \Syl_p(G)$. Suppose that
 $P\normal G$. Then Conjecture~\ref{conj:continuity gap} holds for $G$ and $p$.
\end{thm}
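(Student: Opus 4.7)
My plan is to first use Lemma~\ref{lem:frattini} to pass to the quotient $K:=G/\Phi(P)$, which has the normal elementary abelian Sylow $p$-subgroup $Q:=P/\Phi(P)$. The lemma identifies $\Irr_{p',\si_1}(G)$ with $\Irr(K)$. Since $\exp(K)_p=p$, every character of $K$ has $p$-rationality level at most~$1$; moreover any level-$1$ character of $\Irr_{p'}(G)$ is $\si_1$-fixed and hence lies in $\Irr(K)$. Thus I will convert condition~(i) into the statement: every $\chi\in\Irr(K)$ is $p$-rational, equivalently $\cG$-invariant where $\cG:=\Gal(\QQ_p/\QQ)$. Using Schur--Zassenhaus I will write $K=Q\rtimes X$ for a $p$-complement $X$, and note that (ii) is already a statement about $K$ and $Q$, so the equivalence has been reduced to a purely group-theoretic problem about $K$.

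Next, Lemma~\ref{lem:permutationisomorphism} provides an $A$-equivariant bijection $y\mapsto\la_y\colon Q\to\Irr(Q)$ (with $A=X\times\cG$), and under this bijection $C_K(y)=Q\rtimes C_X(y)=G_{\la_y}$ and $\norm K{\langle y\rangle}=Q\rtimes\norm X{\langle y\rangle}=G_{\la_y}^*$. Since $\gcd(|Q|,|X_{\la_y}|)=1$, Navarro's canonical extension $\hat\la_y$ to $C_K(y)$ exists and Clifford theory parametrises
$$\Irr(K)=\bigl\{\chi_{y,\psi}:=(\hat\la_y\,\psi)^K \ :\ y\in Q/X,\ \psi\in\Irr(C_X(y))\bigr\}.$$
For $\tau\in\cG$ the key computation is $\chi_{y,\psi}^\tau=(\hat\la_y^\tau\,\psi^\tau)^K$, for which I need: (1)~$\psi^\tau=\psi$, which holds because $\psi$ has values in $\QQ_{|X|}$ and $\tau\in\Gal(\QQ_p/\QQ)$ fixes $p'$-roots of unity; (2)~$\hat\la_y^\tau=\widehat{\la_y^\tau}$, which follows because Galois conjugation preserves determinantal orders, together with $\la_y^\tau=\la_{\tau(y)}$ from the $A$-equivariance.

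It then follows by the Clifford correspondence that $\chi_{y,\psi}^\tau=\chi_{y,\psi}$ if and only if $\la_y^\tau$ is $X$-conjugate to $\la_y$ (equivalently, writing $\tau(y)=y^j$, some $x\in\norm X{\langle y\rangle}$ satisfies $y^x=y^j$) \emph{and} $\psi^x=\psi$ for this $x$ (well-defined modulo $C_X(y)$, since inner automorphisms act trivially on characters). Varying $\tau\in\cG$ and $1\neq y\in Q$, I conclude that (i) is equivalent to: (a) every $\tau\in\cG$ is realised on $\langle y\rangle$ by conjugation by some $x\in\norm X{\langle y\rangle}$, and (b) every such $x$ acts trivially on $\Irr(C_X(y))$, hence by Brauer's permutation lemma on the conjugacy classes of $C_X(y)$. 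These are precisely conditions~(a) and~(b) of Conjecture~B, after the identifications $C_K(y)/Q=C_X(y)$, $\norm K{\langle y\rangle}/Q=\norm X{\langle y\rangle}$, and absorbing the trivial inner action of $C_K(y)/Q$. The main obstacle I expect is the careful Clifford-theoretic computation of $\chi_{y,\psi}^\tau$, specifically verifying the Galois-equivariance of the canonical extension $\hat\la_y$ and the well-definedness of the character-conjugation action $\psi\mapsto\psi^x$; once these are in hand, the final translation into the group-theoretic conditions is straightforward.
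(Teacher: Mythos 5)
Your proposal is correct and follows the same overall architecture as the paper's proof: Lemma~\ref{lem:frattini} to pass to $K=G/\Phi(P)$, a Schur--Zassenhaus complement $X$, the $A$-equivariant bijection of Lemma~\ref{lem:permutationisomorphism}, the canonical extension $\hat\la$ from \cite[Cor.~6.2, 6.4]{Nav18}, and the Gallagher/Clifford parametrisation of $\Irr(K\mid\la)$. Where you differ is that you treat (ii.a) and (ii.b) uniformly: you compute when $\chi_{y,\psi}^\tau=\chi_{y,\psi}$ directly via the Clifford correspondence (using $\hat\la_y^{\tau}=\widehat{\la_y^\tau}=\hat\la_y^x$ for $x$ realising $\tau$ on $\langle y\rangle$), and read off (a) from the requirement that $\la_y^\tau$ be $K$-conjugate to $\la_y$, and (b) from $\psi^x=\psi$. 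The paper instead proves (i)$\Rightarrow$(ii.a) by a separate argument: it passes to the semi-inertia subgroup $K_\la^*$, uses $\QQ(\psi^*)=\QQ(\chi)$ from Lemma~\ref{lem:semi inertia}, restricts $\psi^*$ to $Q$ to get $\psi(1)\sum_{k}\la^k$, and invokes linear independence of characters to contradict $|K_\la^*/K_\la|<p-1$. Your version is slightly cleaner since it avoids that restriction-and-linear-independence step and gets both directions of both clauses from a single necessary-and-sufficient criterion; the paper's version makes the role of the semi-inertia group and of $\QQ(\psi^*)$ more explicit. Both are valid; the Galois-equivariance of the canonical extension and the well-definedness of $x\bmod \cent X y$, which you flag as the main technical points, are indeed exactly what needs checking and you have the right reasons for each.
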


\begin{proof}
Conjecture~\ref{conj:continuity gap} trivially holds when $p=2$, so we may
assume that $p$ is odd.
We begin the proof with a few observations. Let $Q=P/\Phi(P)$ and write
$\cG=\Gal(\QQ_p/\QQ)$. If $m=|G|_{p'}$ then restriction yields an isomorphism
$\Gal(\QQ_{mp}/\QQ_m)\rightarrow\cG$ and by abuse of notation, we identify the
elements $\tau\in\Gal(\QQ_{mp}/\QQ_m)$ with their restriction $\tau\in\cG$.
Consider the action by automorphisms of $\cG$ on $Q$ described before
Lemma~\ref{lem:permutationisomorphism}. Write $K=Q\rtimes X$ for some
complement $X$ of $Q$ in $G/\Phi(P)$. By Lemma~\ref{lem:frattini} we have
$\Irr_{p',\si_1}(G)=\Irr(K)$. Write also $A=X \times \cG$. Notice that every
$\la\in\Irr(Q)$ extends to a canonical $\hat\la\in\Irr(K_\la)$ by
\cite[Cor.~6.2 and 6.4]{Nav18}, and $\QQ(\hat\la)=\QQ(\la)=\QQ_p$.
If $H=K^*_\la$ then it follows that $H_{\hat\la}=H_\la=K_\la$ and that
$H^*_{\hat\la}=H$. By combining Clifford's and Gallagher's theorem we have
$$\Irr(K|\la)=\{(\rho\hat\la)^K	\mid\rho\in\Irr(K_\la/Q)\}.$$
Furthermore if $\chi\in\Irr(K|\la)$ has Clifford correspondent
$\psi\in\Irr(K_\la|\la)$ then we denote by
$\psi^*=\psi^{K^*_\la}\in\Irr(K^*_\la|\la)$ so that $\QQ(\psi^*)=\QQ(\chi)$,
where $\chi=(\psi^*)^G$, by Lemma~\ref{lem:semi inertia}.
\medskip

We begin by proving that (i) implies (ii.a), so we are assuming that every
element of $\Irr(K)$ is $p$-rational. Suppose that for some $1\neq y \in Q$ we
have that $\norm K {\langle y \rangle}/\cent{K}{y}$ is not isomorphic to $\cG$;
in other words, that
$$|\norm K {\langle y \rangle}/\cent{K}{y}|<p-1.$$
Let $1_Q\neq \la \in \Irr(Q)$ correspond to $y$ under an $A$-equivariant
bijection as in Lemma~\ref{lem:permutationisomorphism}, so that
$K_\la^*/K_\la =\norm K {\langle y\rangle}/\cent K {y}$. Let $\chi \in \Irr(K)$
be over $\la$. By hypothesis $\chi$ is $p$-rational. Let
$\psi\in\Irr(K_\la | \la)$ be the Clifford correspondent of $\chi$.
As explained in the first paragraph, then $\psi^*=\psi^{K_\la^*}$ is also
$p$-rational. In particular we have 
$$\psi^*(g)=\psi(1)\sum_{k \in [K_\la^*/K_\la]}\la^k(g)\in\QQ
  \qquad\text{for every $g \in Q$},$$
where $[K_\la^*/K_\la]$ is a complete set of representatives of the
$K_\la$-cosets in $K_\la^*$. Let $\cK \leq \cG$ be the image of $K_\la^*/K_\la$
under the homomorphism from Lemma~\ref{lem:semi inertia}, and notice that
$\cK<\cG$. Then $\displaystyle \sum_{\tau \in \cK} \la^\tau$ is rational. But
for $\si\in\cG\setminus\cK$, the equality
$\displaystyle\sum_{\tau\in\cK} \la^\tau= \sum_{\tau\in\cK} \la^{\tau\si}$
contradicts the linear independence of characters of $Q$. This proves that (i)
implies (ii.a).

Now we prove (i) implies (ii.b). Let $1_Q\neq\la\in\Irr(Q)$. Notice that the
previous proof shows $K^*_\la/K_\la\cong\cG$. Now $\Irr(K|\la)$ are all
$p$-rational by hypothesis. Let $\chi\in\Irr(K|\la)$, let
$\psi\in\Irr(K_\la|\la)$ be its Clifford correspondent and let
$\psi^*=\psi^{K^*_\la}$ be its correspondent in $K^*_\la$ and write
$\psi=\rho\hat\la$ as in the first paragraph of this proof. Now, $\psi$ has
values contained in $\QQ_{pm}$ where $m=|G|_{p'}$. Since $\chi$ is $p$-rational,
if $\tau\in\Gal(\QQ_{pm}/\QQ_m)\cong\cG$ we have $\chi^\tau=\chi$ so
$(\psi^*)^\tau=\psi^*$. Further, by the first comment of this paragraph, there
is $x\in K_\la^*/K_\la$ with $\la^\tau=\la^x$. This implies that both
$\psi^\tau$ and $\psi^x$ are the Clifford correspondents of $\psi^*$ in $K_\la$
over $\la^\tau$, and therefore $\psi^\tau=\psi^x$. Using that $\rho^\tau=\rho$,
$$\rho\hat\la^\tau=\rho^\tau\hat\la^\tau=\psi^\tau=\psi^x
  =\rho^x\hat\la^x=\rho^x\hat\la^\tau$$
which implies $\rho=\rho^x$. Since this holds for any $\chi\in\Irr(K_\la|\la)$
and any $\tau\in\cG$, we conclude that $\rho^x=\rho$ for
any $\rho\in\Irr(K_\la/Q)$ and any $x\in K_\la^*$. By Brauer's Lemma on
character tables (see \cite[Thm~2.3]{Nav18}) we have that $K_\la^*/Q$ acts
trivially on the set of conjugacy classes of $K_\la/Q$, and this concludes the
proof of (ii.b) assuming~(i).
\medskip

We finally prove that (ii) implies (i). By Lemma \ref{lem:frattini} we need to
show that every character of $K$ is $p$-rational. Let $\chi \in\Irr(K)$ and let
$\la\in\Irr(Q)$ be under $\chi$. If $\la$ is trivial, then $\chi$
can be seen as a character of the $p'$-group $K/Q$ and
hence $\chi$ is $p$-rational, as wanted. We can therefore assume that $\la$ is
nontrivial. By Lemma~\ref{lem:permutationisomorphism} there is some
$1\neq y\in Q$ such that $K_\la^*/K_\la=\norm K {\langle y\rangle}/\cent K{y}$.
By hypothesis and Lemma~\ref{lem:semi inertia}(b), we have
$K_\la^*/K_\la\cong \cG$. Let $\psi \in \Irr(K_\la | \la)$ be the Clifford
correspondent of $\chi$ and write $\psi^*=\psi^{K_\la^*}$. We have that
$\QQ(\chi)=\QQ(\psi^*)$ by Lemma~\ref{lem:semi inertia}(a), so we want to show
that $\psi^*$ is $p$-rational. 

By definition, for all $x\in K_\la^*$ there is $\tau\in\cG$ with
$\la^\tau=\la^x$. Since $|K_\la^*/K_\la|=|\cG|$, the converse is also true.
So, fixing some $\tau\in\cG$ we have $\la^\tau=\la^x$ for $x\in K^*_\la$. 
Now, using the notation established at the beginning of this proof, we have
$\hat\la^x=\hat\la^\tau$, where $\hat\la$ is the canonical extension of $\la$
to $K_\la$. Thus
$$\psi^\tau=\rho^\tau\hat\la^\tau=\rho\hat\la^x=\rho^x\hat\la^x=\psi^x$$
where we have used in the second to last equality that $\rho^x=\rho$ by (ii.b)
and Brauer's permutation lemma. Thus $\psi^\tau=\psi^x$ and therefore
$$(\psi^*)^\tau=(\psi^\tau)^{K_\la^*}=(\psi^x)^{K_\la^*}=\psi^*$$
and we conclude that $\psi^*$ is $\tau$-invariant for any $\tau\in\cG$. Thus
$\psi^*$ is $p$-rational and we are done.
\end{proof}

\begin{cor}\label{cor:Hung-Navarro}
 Let $G$ be a finite group, $p$ a prime. If $G$ satisfies the McKay--Navarro
 conjecture for the prime $p$ then Conjecture~\ref{conj:continuity gap} holds
 for $G$ and $p$.
\end{cor}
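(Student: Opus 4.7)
The plan is to combine the transfer provided by the McKay--Navarro bijection with Theorem~\ref{thm:normaldefect} applied to the local subgroup $N_G(P)$. The key observation that makes everything work is that condition (ii) of Conjecture~\ref{conj:continuity gap} is intrinsically a statement about $N_G(P)/\Phi(P)$, not about $G$ itself.

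First, I would set $H=\norm G P$ and note that $P\in\Syl_p(H)$ is normal in $H$, with $\norm H P=H$. In particular, the groups $K=\norm G P/\Phi(P)$ and $Q=P/\Phi(P)$ appearing in Conjecture~\ref{conj:continuity gap} coincide with the corresponding groups computed inside $H$. Therefore condition (ii) for the pair $(G,p)$ is \emph{verbatim} the same statement as condition~(ii) for the pair $(H,p)$.

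Second, since $H$ has a normal Sylow $p$-subgroup, Theorem~\ref{thm:normaldefect} applies to $H$ and yields the equivalence of (i) and (ii) for $H$.

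Third, it remains to show that condition~(i) for $G$ is equivalent to condition~(i) for $H$. This is precisely where the hypothesis enters: the assumption that $G$ satisfies the McKay--Navarro conjecture \cite[Conj.~A]{Nav04} implies in particular the existence of a bijection
$$^*:\Irr_{p'}(G)\longrightarrow\Irr_{p'}(H)$$
with $c(\chi)_p=c(\chi^*)_p$ for every $\chi\in\Irr_{p'}(G)$, i.e.\ part~(1) of Conjecture~\ref{conj:navarro}. Condition~(i) asks precisely that no character in $\Irr_{p'}(G)$ have $p$-rationality level equal to $1$, i.e.\ that no element of $\Irr_{p'}(G)$ has $c(\chi)_p=p$; under a bijection preserving the $p$-part of the conductor this property transfers between $G$ and $H$ in both directions.

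Chaining the three equivalences gives (i) for $G$ $\iff$ (i) for $H$ $\iff$ (ii) for $H$ $\iff$ (ii) for $G$, which is the claim. There is no real obstacle here: the work has already been done in Theorem~\ref{thm:normaldefect}, and the only conceptual point to emphasise is that condition~(ii) is a local condition, so it is insensitive to passing between $G$ and its normaliser $\norm G P$.
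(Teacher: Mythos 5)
Your proof is correct and follows essentially the same route as the paper: transfer condition~(i) between $G$ and $\norm G P$ via the conductor-preserving McKay--Navarro bijection, observe that condition~(ii) is already a statement about $\norm G P/\Phi(P)$, and apply Theorem~\ref{thm:normaldefect} to $\norm G P$. You are somewhat more explicit than the paper in spelling out why condition~(ii) is insensitive to replacing $G$ by $\norm G P$, but this is the same argument.
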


\begin{proof}
Let $P\in\Syl_p(G)$. As explained at the beginning of Section \ref{sec:2}, if
\cite[Conj.~A]{Nav04} holds for $G$ and $p$ then there is a bijection
$^*:\Irr_{p'}(G)\rightarrow\Irr_{p'}(\norm G P)$ with $c(\chi)_p=c(\chi^*)_p$
for all $\chi\in\Irr_{p'}(G)$. It follows that the numbers of $p$-rational and
almost $p$-rational characters in both $G$ and $\norm G P$ coincide. Therefore
we may work in $\norm G P$ and then we can apply Theorem~\ref{thm:normaldefect}.
\end{proof}

As already mentioned in the Introduction, thanks to
Corollary~\ref{cor:Hung-Navarro} and \cite{Bru-Nat21,Nav04,Ruh21,Tur13},
Conjecture~\ref{conj:continuity gap} (Conjecture B from the Introduction)
holds for $G$ and $p$ whenever $G$ is $p$-solvable group, a group of Lie type
in characteristic $p$, a sporadic, a symmetric or and alternating group. 

There is a natural version of Conjecture~\ref{conj:continuity gap} for principal
blocks.

\begin{conj}   \label{conj:ppalblock} 
 Let $G$ be a finite group, $p$ be a prime and $P\in \Syl_p(G)$. Write
 $K=\norm G P/\oh{p'}{\norm G P}\Phi(P)$ and $Q\in\Syl_p(K)$. Then the following
 are equivalent:
 \begin{enumerate}[\rm(i)]
  \item Every almost $p$-rational $\chi \in \Irr_{p'}(B_0(G))$ is $p$-rational. 
  \item The following hold for all $1\neq y\in Q$:
   \begin{enumerate}[\rm(a)]
    \item $\norm K {\langle y\rangle}/\cent K{y}\cong\Aut(\langle y\rangle)$,
     and
    \item $\norm K {\langle y \rangle}/Q$ acts trivially on the set of conjugacy
     classes of $\cent K{y}/Q$.
   \end{enumerate}
 \end{enumerate}
\end{conj}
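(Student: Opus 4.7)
The plan is to mimic, in two steps, the strategy that derived Corollary \ref{cor:Hung-Navarro} from Theorem \ref{thm:normaldefect}: first establish the principal-block analogue in the normal defect case, then reduce the general case to it via Conjecture \ref{conj:navarro} (the principal-block version of the McKay--Navarro conjecture).

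For the normal defect case, assume $P\normal G$ and set $\bar G = G/\oh{p'}{G}$, with $\bar P$ the image of $P$, so that $\bar P\normal\bar G$, $\oh{p'}{\bar G}=1$, and the canonical map $P\to\bar P$ is an isomorphism identifying $\Phi(P)$ with $\Phi(\bar P)$. Under the standard description of the principal block of a group with normal Sylow $p$-subgroup (see e.g.\ \cite[Thm~10.20]{Nav98}), one has $\Irr(B_0(G)) = \Irr(\bar G)$; in particular the $p$-rational (resp.\ almost $p$-rational) characters of $p'$-degree in $B_0(G)$ are exactly the $p$-rational (resp.\ almost $p$-rational) characters in $\Irr_{p'}(\bar G)$. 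Moreover, since $\norm G P=G$, we obtain
\[
K \;=\; G/\oh{p'}{G}\Phi(P) \;\cong\; \bar G/\Phi(\bar P) \;=\; \norm{\bar G}{\bar P}/\Phi(\bar P),
\]
so the datum $(K,Q)$ attached to $G$ by Conjecture \ref{conj:ppalblock} coincides with the datum attached to $\bar G$ by Conjecture \ref{conj:continuity gap}. Consequently Conjecture \ref{conj:ppalblock} for $G$ reduces to Conjecture \ref{conj:continuity gap} for $\bar G$, which is exactly Theorem \ref{thm:normaldefect}.

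For the general case, suppose that Conjecture \ref{conj:navarro} holds for $G$ and $p$. By its very formulation, this yields a bijection $\Irr_{p'}(B_0(G))\to\Irr_{p'}(B_0(\norm G P))$ preserving $c(\cdot)_p$, and hence preserving both $p$-rationality and almost $p$-rationality. Therefore condition (i) holds for $G$ if and only if it holds for $\norm G P$; since $P$ is a normal Sylow $p$-subgroup of $\norm G P$, the previous paragraph applies to $\norm G P$, and condition (ii) is defined purely in terms of $\norm G P$ and so is unchanged. The principal obstacle is exactly that Conjecture \ref{conj:navarro} (equivalently, the principal-block case of \cite[Conj.~B]{Nav04}) is still open in general, so the full statement remains genuinely conjectural. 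Nevertheless, combining the normal-defect case with the known instances of \cite[Conj.~B]{Nav04} from \cite{Bru-Nat21,Nav04,Ruh21,Tur13} would deliver Conjecture \ref{conj:ppalblock} unconditionally for $p$-solvable groups, sporadic groups, symmetric and alternating groups, and simple groups of Lie type in defining characteristic, in perfect parallel with the status of Conjecture B.
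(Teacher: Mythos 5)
Your argument is exactly the paper's: Corollary \ref{cor:Hung-Navarro-ppalblock} reduces, via Conjecture \ref{conj:navarro}, to the case of a normal Sylow $p$-subgroup, then passes to $G/\oh{p'}G$ using \cite[Thm~10.20]{Nav98} and invokes Theorem \ref{thm:normaldefect}, precisely as you lay out. The one overreach is the closing claim that simple groups of Lie type in defining characteristic are covered unconditionally: \cite{Ruh21} verifies the McKay--Navarro conjecture \cite[Conj.~A]{Nav04} there, not the block refinement \cite[Conj.~B]{Nav04} that Conjecture \ref{conj:navarro} requires, so the paper's unconditional list for Conjecture \ref{conj:ppalblock} is $p$-solvable, sporadic, symmetric and alternating groups only (citing \cite{Gia21} in place of \cite{Ruh21}).
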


As explained at the beginning of Section~\ref{sec:2}, the
Alperin--McKay--Navarro Conjecture \cite[Conj.~B]{Nav04} implies
Conjecture~\ref{conj:navarro}.

\begin{cor}   \label{cor:Hung-Navarro-ppalblock}
 Let $G$ be a finite group and $p$ a prime. If $G$ satisfies
 Conjecture~\ref{conj:navarro} for $p$ then Conjecture~\ref{conj:ppalblock}
 holds for $G$ and~$p$.
\end{cor}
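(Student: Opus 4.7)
The plan is to mirror the proof of Corollary~\ref{cor:Hung-Navarro}, with the additional input that Conjecture~\ref{conj:navarro} transfers not merely the $p$-part of conductors of $p'$-degree characters but also their principal block membership. Since this preserves both the property of being $p$-rational and of being almost $p$-rational, condition~(i) of Conjecture~\ref{conj:ppalblock} for $G$ is equivalent to the corresponding statement for $N:=\norm G P$. The problem therefore reduces to analysing $\Irr_{p'}(B_0(N))$ where $P\normal N$.

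First I would invoke Conjecture~\ref{conj:navarro} to obtain a bijection $^*\colon\Irr_{p'}(G)\to\Irr_{p'}(N)$ with $c(\chi)_p=c(\chi^*)_p$, and which restricts to a bijection $\Irr_{p'}(B_0(G))\to\Irr_{p'}(B_0(N))$ by~(2) of that statement. A character $\psi$ satisfies $c(\psi)_p=1$ (resp.\ $c(\psi)_p\leq p$) precisely when $\psi$ is $p$-rational (resp.\ almost $p$-rational), so the bijection sends the $p$-rational characters of $\Irr_{p'}(B_0(G))$ bijectively onto those of $\Irr_{p'}(B_0(N))$, and likewise for the almost $p$-rational ones. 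Thus condition~(i) of Conjecture~\ref{conj:ppalblock} may be checked inside $N$ rather than in~$G$.

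Next, since $P\in\Syl_p(N)$ is normal, the principal $p$-block of $N$ satisfies $\Irr(B_0(N))=\Irr(N/\oh{p'}{N})$ by \cite[Thm~10.20]{Nav98}. Write $\bar N=N/\oh{p'}N$ and $\bar P=P\oh{p'}N/\oh{p'}N$, a normal Sylow $p$-subgroup of $\bar N$ canonically isomorphic to~$P$. Then condition~(i) of Conjecture~\ref{conj:ppalblock} for $G$ is equivalent to condition~(i) of Conjecture~\ref{conj:continuity gap} applied to $\bar N$, and Theorem~\ref{thm:normaldefect} is applicable to $\bar N$ because it has a normal Sylow $p$-subgroup.

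Finally, I would check that the local data match. Applying Theorem~\ref{thm:normaldefect} to $\bar N$ uses the groups $\bar N/\Phi(\bar P)$ and $\bar P/\Phi(\bar P)$. Under the identifications above, $\Phi(\bar P)$ corresponds to $\Phi(P)\oh{p'}N/\oh{p'}N$, so
\[
\bar N/\Phi(\bar P)\;\cong\;N/\oh{p'}N\cdot\Phi(P)\;=\;K
\]
and $\bar P/\Phi(\bar P)$ is a Sylow $p$-subgroup of~$K$, in agreement with the $K$ and $Q$ named in Conjecture~\ref{conj:ppalblock}. Consequently the local conditions~(a) and~(b) in Conjecture~\ref{conj:ppalblock} are exactly those supplied by Theorem~\ref{thm:normaldefect} for~$\bar N$, and the equivalence (i)$\Leftrightarrow$(ii) transfers back to~$G$. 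No real obstacle arises; the entire argument is a careful bookkeeping exercise combining the block-preserving bijection with the normal-defect case already established.
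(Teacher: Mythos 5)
Your argument is correct and follows the same route as the paper: use Conjecture~\ref{conj:navarro} to transfer both conductor $p$-parts and principal-block membership to $\norm G P$, apply \cite[Thm~10.20]{Nav98} to identify $\Irr(B_0(\norm G P))$ with $\Irr(\norm G P/\oh{p'}{\norm G P})$, and then invoke Theorem~\ref{thm:normaldefect} for that quotient. The only difference is that you spell out the identification $\bar N/\Phi(\bar P)\cong K$ and $\bar P/\Phi(\bar P)\cong Q$, which the paper leaves implicit.
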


\begin{proof}
By Conjecture~\ref{conj:navarro} we may assume $P\normal G$.
By \cite[Thm~10.20]{Nav98}, in this case $\Irr(B_0(G))=\Irr(G/\oh{p'}G)$.
We apply Theorem~\ref{thm:normaldefect} to $G/\oh{p'}G$ to conclude.
\end{proof}

By Corollary \ref{cor:Hung-Navarro-ppalblock} and
\cite{Nav04, Tur13, Gia21, Bru-Nat21}, Conjecture~\ref{conj:ppalblock} holds
for $G$ and $p$ whenever $G$ is $p$-solvable, sporadic, a symmetric or an
alternating group. 
\smallskip

We close by mentioning that the dihedral group $G={\sf D}_{24}$ possesses
irreducible characters of degree
coprime to $3$ and of $3$-rationality level $1$, but none of them lie in the
principal $3$-block. Notice that in fact $G/\oh{3'}G\cong\fS_3$ so the
conclusion of Conjecture~\ref{conj:ppalblock}(ii.b) is satisfied, but not the
one of Conjecture~\ref{conj:continuity gap}(ii.b).


\end{document}